\documentclass{article}
\usepackage{amsmath,amssymb,amsthm}
\usepackage{bm}
\setlength{\multlinegap}{0pt}
\newtheorem{theorem}{Theorem}[section]
\newtheorem{proposition}[theorem]{Proposition}
\newtheorem{lemma}[theorem]{Lemma}

\theoremstyle{definition}
\newtheorem{definition}[theorem]{Definition}
\theoremstyle{remark}

\newcommand{\N}{\mathbb{N}}

\newcommand{\Q}{\mathbb{Q}}
\newcommand{\R}{\mathbb{R}}
\newcommand{\C}{\mathbb{C}}

\DeclareMathOperator{\re}{Re}

\newcommand{\balpha}{\bm{\alpha}}
\newcommand{\bbeta}{\bm{\beta}}

\newcommand{\bmu}{\bm{\mu}}

\newcommand{\alphavec}{\alpha_1,\ldots,\alpha_s}
\newcommand{\betavec}{\beta_1,\ldots,\beta_t}
\newcommand{\muvec}{\mu_1,\ldots,\mu_p}

\newcommand{\calS}{\mathcal{S}}

\begin{document}
\title{Multiple series expressions for the Newton series which
 interpolate finite multiple harmonic sums}
\author{Gaku Kawashima}
\date{}
\maketitle

\begin{abstract}
 The Newton series which interpolate finite multiple harmonic sums
 are useful in the study of multiple zeta values (MZV's).
 In this paper, we prove that these Newton series can be written as
 multiple series.
 As an application, we give a formula for MZV's
 which contains the duality.
\end{abstract}

\section{Introduction} \label{sec0}
Let $k_1,\ldots,k_p$ be positive integers and $k_1 \ge 2$.
Then the following nested series
\begin{displaymath}
 \zeta(k_1,\ldots,k_p) = 
 \sum_{n_1 > \cdots > n_p > 0}
 \frac{1}{n_1^{k_1} \cdots n_p^{k_p}}
\end{displaymath}
is called a multiple zeta value,
which is introduced by Hoffman \cite{H} and
Zagier \cite{Z}.
It is known that there are many $\Q$-linear relations among these numbers.
For example, the simplest one is the relation
\begin{displaymath}
 \zeta(3) = \zeta(2,1),
\end{displaymath}
which is due to Euler.
In recent years, a great deal of work has been done
on the problem of finding all such relations.
Among others, the regularized double shuffle relations
studied by Ihara, Kaneko and Zagier are conjectured
to imply all $\Q$-linear relations \cite{IKZ}.
\par
The Newton series which interpolate finite multiple harmonic sums
seem to be useful for the study of relations 
among multiple zeta values (MZV's).
In fact, in \cite{K}, a formula which contains Ohno's relation
was derived by using these series.
(Ohno's relation is a generalization of the duality and the sum formula
and gives many relations among MZV's \cite{O}.)
Let $\balpha$ be a multi-index (i.e. an ordered set of positive
integers).
Then the Newton series used in \cite{K} is as follows:
\begin{displaymath}
 F_{\balpha}(z) = 
 \sum_{n=0}^{\infty} (-1)^n (\nabla S_{\balpha})(n) \binom{z}{n},
\end{displaymath}
where 
$\nabla$ denotes the inversion operator on the space
$\C^{\N}$ of complex-valued sequence (see Definition \ref{df1-20})
and the sequence $S_{\balpha}$ is defined by
\begin{displaymath}
 S_{\balpha}(n) =
 \sum_{n > n_1 \ge \cdots \ge n_p \ge 0}
 \frac{1}{(n_1+1)^{\alpha_1} \cdots (n_p+1)^{\alpha_p}},
 \quad n \in \N.
\end{displaymath}
(In this paper, we denote by $\N$ the set of all non-negative integers.)
The series $F_{\balpha}(z)$ converges in the half-plane $\re z > -1$
and takes the value $S_{\balpha}(n)$ at $z=n \in \N$.
In addition, its Taylor coefficients at the origin are expressed 
in terms of MZV's.
Our objective in this paper is to present another expression
of the Newton series $F_{\balpha}(z)$.
This Newton series can be written as a multiple series.
\par
In Section \ref{sec2}, we will define a multiple series $G_{\balpha}(z)$
for any multi-index $\balpha$.
For example, we have
\begin{align*}
 G_{3,3}(z) = \sum_{n_1 > n_2 > n_3 \ge n_4 > n_5 > n_6 > 0}
 &\left(\frac{1}{n_1} - \frac{1}{n_1+z}\right)
 \frac{1}{n_2n_3(n_4+z)n_5n_6}, \\[-4mm]
 &\hspace{4pt}\underbrace{\hspace{9em}}_3 \:
 \underbrace{\hspace{5.2em}}_3
\end{align*}
\begin{align*}
 G_{2,2,2}(z) = \sum_{n_1 > n_2 \ge n_3 > n_4 \ge n_5 > n_6 > 0}
 &\left(\frac{1}{n_1} - \frac{1}{n_1+z}\right)
 \frac{1}{n_2(n_3+z)n_4(n_5+z)n_6} \\[-4mm]
 &\hspace{4pt}\underbrace{\hspace{7.9em}}_2 \:
 \underbrace{\hspace{4.2em}}_2 \:
 \underbrace{\hspace{4.2em}}_2 
\end{align*}
and
\begin{align*}
 G_{1,1,3,1}(z) = \sum_{n_1 \ge n_2 \ge n_3 > n_4 > n_5 \ge n_6 > 0} 
 &\left(\frac{1}{n_1} - \frac{1}{n_1+z}\right)
 \frac{1}{(n_2+z)(n_3+z)n_4n_5(n_6+z)}. \\[-4mm]
 &\hspace{4pt}\underbrace{\hspace{6.6em}}_1 \: 
 \underbrace{\hspace{3.3em}}_1 \:
 \underbrace{\hspace{5.3em}}_3 \:
 \underbrace{\hspace{3.3em}}_1
\end{align*}
The following is our main result:
For any multi-index $\balpha$, we have
\begin{displaymath}
 F_{\balpha}(z) = G_{\balpha^{\backprime}}(z), \quad \re z > -1,
\end{displaymath}
where $\balpha^{\backprime}$ is the dual multi-index of $\balpha$
and is defined in Section \ref{sec1}.
(Actually, we give the above equation in a larger half-plane.
See Theorem \ref{th3-40}.)
As an application of this equation, we prove a formula for MZV's
which contains the duality (Proposition \ref{prop4-30}).
\section{Multi-indices and finite multiple harmonic sums} \label{sec1}
In this section, we give some definitions related to multi-indices
and finite multiple harmonic sums which are used throughout the paper.
\par
A finite sequence of positive integers is called a
multi-index. 
The length and the weight of a multi-index $\balpha=(\alphavec)$ 
are defined to be
\begin{displaymath}
 l(\balpha) := s \quad \text{and} \quad 
 |\balpha| := \alpha_1 + \cdots + \alpha_s,
\end{displaymath}
respectively.
Let $m$ be a positive integer.
The multi-indices of weight $m$ are in one-to-one
correspondence with the subsets of the set $\{1,2,\ldots,m-1\}$ by
\begin{displaymath}
 \calS_m \colon (\alphavec) \mapsto 
 \{\alpha_1, \alpha_1 + \alpha_2, \ldots, 
 \alpha_1 + \alpha_2 + \cdots + \alpha_{s-1}\}.
\end{displaymath}
For example, in the case $m=3$, we have
\begin{displaymath}
 (3) \mapsto \emptyset, \quad (1,2) \mapsto \{1\}, \quad (2,1) \mapsto
 \{2\} \quad \text{and} \quad (1,1,1) \mapsto \{1,2\}
\end{displaymath}
from the diagrams
\begin{displaymath}
  {\arraycolsep=1pt
  \begin{array}{ccccc}
          & &        & &         \\
  \bigcirc& &\bigcirc& &\bigcirc \\
          &1&        &2&        
  \end{array}}\,,\qquad
  {\arraycolsep=1pt
  \begin{array}{ccccc}
          &\downarrow&        & &         \\
  \bigcirc&          &\bigcirc& &\bigcirc \\
          &1         &        &2&
  \end{array}}\,, \qquad
  {\arraycolsep=1pt
  \begin{array}{cccccccccc}
          & &        &\downarrow&         \\
  \bigcirc& &\bigcirc&          &\bigcirc \\
          &1&        &2         &
  \end{array}}\qquad \text{and} \qquad
  {\arraycolsep=1pt
  \begin{array}{ccccc}
          &\downarrow&        &\downarrow &         \\
  \bigcirc&          &\bigcirc&           &\bigcirc \\
          &1         &        &2          &
  \end{array}}\,.
\end{displaymath}
\begin{definition}
 \label{df1-40}
 Let $m$ be a positive integer and let $\balpha$ be a multi-index of weight
 $m$. Then we define
 \begin{displaymath}
  \balpha^{*} = \calS_m^{-1}(\calS_m(\balpha)^c),
 \end{displaymath}
 where $\calS_m(\balpha)^c$ denotes the complement of $\calS_m(\balpha)$ in 
 the set $\{1,2,\ldots,m-1\}$.
\end{definition}
For example, we have
\begin{displaymath}
 (2,2)^{*} = (1,2,1),\quad (1,1,2)^{*} = (3,1) \quad \text{and} \quad
 (4)^{*} = (1,1,1,1)
\end{displaymath}
from the diagrams
\begin{displaymath}
 {\setlength{\arraycolsep}{0pt}
 \begin{array}{ccccccc}
          &        &        &\downarrow&        &        &         \\
  \bigcirc&        &\bigcirc&          &\bigcirc&        &\bigcirc \\
          &\uparrow&        &          &        &\uparrow&
 \end{array}}\, , \qquad
 {\setlength{\arraycolsep}{0pt}
 \begin{array}{ccccccc}
          &\downarrow&        &\downarrow&        &        &         \\
  \bigcirc&          &\bigcirc&          &\bigcirc&        &\bigcirc \\
          &          &        &          &        &\uparrow&  
 \end{array}} \qquad \text{and} \qquad
 {\setlength{\arraycolsep}{0pt}
 \begin{array}{ccccccc}
          &        &        &        &        &        &         \\
  \bigcirc&        &\bigcirc&        &\bigcirc&        &\bigcirc \\
          &\uparrow&        &\uparrow&        &\uparrow&
 \end{array}}\,,
\end{displaymath}
where the lower arrows are in the complementary slots to the upper arrows.
Let $\balpha=(\alphavec)$ be a multi-index.
We define the multi-index $\balpha^{\tau}$ to be 
$(\alpha_s,\ldots,\alpha_1)$ and put
\begin{equation}
 \balpha^{\backprime} := (\balpha^{*})^{\tau} = (\balpha^{\tau})^{*}. 
  \label{eq1-10}
\end{equation}
If $\balpha \neq (1)$, 
we define the multi-indices ${}^{-}\!\balpha$ and $\balpha^{-}$ by
\begin{displaymath}
 {}^{-}\!\balpha =
 \begin{cases}
  (\alpha_1-1,\alpha_2,\ldots,\alpha_s) & \text{if $\alpha_1 \ge 2$} \\
  (\alpha_2,\ldots,\alpha_s) & \text{if $\alpha_1 = 1$} 
 \end{cases}
\end{displaymath}
and
\begin{displaymath}
 \balpha^{-} = 
 \begin{cases}
  (\alpha_1,\ldots,\alpha_{s-1},\alpha_s-1) & \text{if $\alpha_s \ge 2$} \\
  (\alpha_1,\ldots,\alpha_{s-1}) & \text{if $\alpha_s = 1$},
 \end{cases}
\end{displaymath}
respectively. Then we have
\begin{gather*}
 (\balpha^{-})^{*} = (\balpha^{*})^{-}, \quad ({}^{-}\!\balpha)^{*} =
 {}^{-}\!(\balpha^{*}),\\
 (\balpha^{-})^{\tau} = {}^{-}\!(\balpha^{\tau}), \quad ({}^{-}\!\balpha)^{\tau} =
 (\balpha^{\tau})^{-},
\end{gather*}
and therefore,
\begin{equation}
 (\balpha^{-})^{\backprime} = {}^{-}\!(\balpha^{\backprime}), \quad 
  ({}^{-}\!\balpha)^{\backprime}
 = (\balpha^{\backprime})^{-}. \label{eq1-20}
\end{equation}
\par
Next, we turn to finite multiple harmonic sums.
Let $\N$ denotes the set of all non-negative integers.
We first define two operators on the space $\C^{\N}$ of complex-valued 
sequences which are used in the sequel.
\begin{definition}
 \label{df1-10}
 We define the difference operator $\Delta \colon \C^{\N} \to \C^{\N}$
 by putting
 \begin{displaymath}
  (\Delta a)(n) = a(n) - a(n+1)
 \end{displaymath}
 for any $a \in \C^{\N}$ and any $n \in \N$.
\end{definition}
We denote the composition of $\Delta$ with itself $n$ times by
$\Delta^n$.
If $n=0$, we define $\Delta^0$ to be the identity map on $\C^{\N}$.
\begin{definition}
 \label{df1-20}
 We define the inversion operator $\nabla \colon \C^{\N} \to \C^{\N}$ by
 putting
 \begin{displaymath}
  (\nabla a)(n) = (\Delta^n a)(0)
 \end{displaymath}
 for any $a \in \C^{\N}$ and any $n \in \N$.
\end{definition}
Finite multiple harmonic sums are partial sums of multiple zeta
value series.
In this paper, we consider the following finite nested sum
\begin{displaymath}
 S_{\balpha}(n) = \sum_{n>n_1 \ge \cdots \ge n_s \ge 0}
 \frac{1}{(n_1+1)^{\alpha_1} \cdots (n_s+1)^{\alpha_s}}
\end{displaymath}
for any multi-index $\balpha$ and any non-negative integer $n$.
We note that $S_{\balpha}(0)=0$.
If we calculate the difference of the sequence $S_{\balpha}$,
we obtain
\begin{equation}
 (\Delta S_{\balpha})(n) = 
 \frac{-1}{(n+1)^{\alpha_1}} S_{\alpha_2,\ldots,\alpha_s}(n+1), \label{eq1-30}
\end{equation}
where we put $S_{\alpha_2,\ldots,\alpha_s}(n+1)=1$ in case $s=1$.
\section{Multiple series which interpolate finite multiple harmonic
 sums} \label{sec2}
In this section, we define the multiple series $G_{\balpha}(z)$
for any multi-index $\balpha$ and any complex number $z$ not equal to
negative integers.
Our purpose is to prove that this multiple series interpolates 
the finite multiple harmonic sums 
$\{S_{\balpha^{\backprime}}(n)\}_{n=0}^{\infty}$.
That is, we prove that $G_{\balpha}(n)=S_{\balpha^{\backprime}}(n)$ for any 
non-negative integer $n$.
\par
For any integer $r \ge 1$, positive integers $n_1,\ldots,n_{r}$ 
and $z \in \C \setminus \{-1,-2,\ldots\}$, we write
\begin{align*}
 P_{r}(n_1,n_2,\ldots,n_{r};z) &= \frac{1}{(n_1+z) n_2 \cdots n_{r}}
 \intertext{and}
 \tilde{P}_{r}(n_1,n_2,\ldots,n_{r};z) &= 
 \left(\frac{1}{n_1} - \frac{1}{n_1+z} \right) \frac{1}{n_2 \cdots n_{r}}.
\end{align*}
We define $G_{\balpha}(z)$, for any multi-index $\balpha=(\alphavec)$ and 
$z \in \C \setminus \{-1,-2,\ldots\}$, to be
\begin{displaymath}
 \sum_{\substack{\:\:\:m_1 > \cdots > n_1\\ \ge m_2 > \cdots > n_2\\ \cdots\\
 \ge m_s > \cdots > n_s}}
 \tilde{P}_{\alpha_1}(m_1,\ldots,n_1;z)
 P_{\alpha_2}(m_2,\ldots,n_2;z) \cdots
 P_{\alpha_s}(m_s,\ldots,n_s;z),
\end{displaymath}
where the sum is taken over all positive integers which satisfy the condition.
Examples are given in Section \ref{sec0}.
\par
It is easily seen that the multiple series $G_{\balpha}(z)$ is a
meromorphic function in the whole complex plane 
with poles at most at $z=-\alpha_s,-\alpha_s-1,-\alpha_s-2,\ldots.$
We note that $G_{\balpha}(0) = 0$.
In this section, we consider only the case $z \in \N$ and regard
$G_{\balpha}$ as a sequence.
The following Proposition \ref{prop2-10} is the key to prove 
Proposition \ref{th2-20}, which is the purpose of this section,
and the proof will be given later.
\begin{proposition}
 \label{prop2-10}
 Let $\balpha=(\alphavec)$ be a multi-index with 
 $|\balpha| \ge 2$ and let $n \in \N$. \\
 $(\mathrm{i})$ If $\alpha_s \ge 2$ then we have
 \begin{displaymath}
  (\Delta G_{\balpha})(n) = 
  \frac{-1}{n+1} G_{\balpha^{-}}(n+1). 
 \end{displaymath}
 $(\mathrm{ii})$ If $\alpha_s = 1$ then we have
 \begin{displaymath}
  (\Delta G_{\balpha})(n) = 
  \frac{1}{n+1} (\Delta G_{\balpha^{-}})(n).
 \end{displaymath}
\end{proposition}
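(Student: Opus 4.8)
The plan is to compute the difference directly from the definition. Since $(\Delta G_{\balpha})(n) = G_{\balpha}(n) - G_{\balpha}(n+1)$ and the multiple series $G_{\balpha}(z)$ is summed over an index set that does not depend on $z$, I would write this difference as a single multiple series whose summand is the difference of the two summands, and then reduce the multi-index $\balpha$ to $\balpha^{-}$ by isolating the smallest summation variable $n_s$, i.e.\ the bottom of the last block. The elementary identities
\begin{displaymath}
 \frac{1}{v+n} - \frac{1}{v+n+1} = \frac{1}{(v+n)(v+n+1)}
 \quad\text{and}\quad
 \sum_{v=1}^{w}\left(\frac{1}{v+n} - \frac{1}{v+n+1}\right)
 = \frac{1}{n+1} - \frac{1}{w+n+1}
\end{displaymath}
are the main tools; the dichotomy in the statement reflects how $n_s$ enters the summand, namely through the $z$-free factor $1/n_s$ when $\alpha_s \ge 2$ (so that $n_s$ is not the top of its block) and through $1/(n_s+z)$ when $\alpha_s = 1$ (so that $n_s$ is alone in its block).

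For part (ii) I would factor $G_{\balpha}(z) = \sum_{R} R(z)\,L(z)$, where $R(z)$ is the product over the first $s-1$ blocks, summed over their configurations, and $L(z) = \sum_{v=1}^{w} 1/(v+z)$ with $w$ the bottom variable of the $(s-1)$-st block. Here $\balpha^{-} = (\alpha_1,\ldots,\alpha_{s-1})$, and deleting the last block identifies $\sum_R R(z)$ with $G_{\balpha^{-}}(z)$. Expanding $R(n)L(n) - R(n+1)L(n+1)$ and applying the telescoping sum above to the factor $L$ produces the prefactor $1/(n+1)$, and the remaining terms should reassemble into $(\Delta G_{\balpha^{-}})(n)$, which is the asserted identity.

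For part (i), where $\alpha_s \ge 2$, summing out the $z$-free bottom variable $n_s$ no longer telescopes but instead attaches the weight $\sum_{k=1}^{\,v-1}1/k$ to the new bottom variable $v$ of the last block. I would therefore use summation by parts (Abel summation) over the variables of the last block: the difference $1/\bigl((m_s+n)(m_s+n+1)\bigr)$ coming from its top variable, combined with this weight, is intended to produce the shift $z \mapsto z+1$ and the prefactor $-1/(n+1)$. The case $s=1$, $\balpha=(2)$ already exhibits the mechanism in pure form, since there the first and last blocks coincide: a single summation by parts turns $\sum_{m\ge2}\bigl(\sum_{k=1}^{m-1}1/k\bigr)\big/\bigl((m+n)(m+n+1)\bigr)$ into $G_{(1)}(n+1)/(n+1)$, giving $(\Delta G_{(2)})(n) = -G_{(1)}(n+1)/(n+1)$.

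The hard part is that the dependence on $z$ is not localized in the last block: the factor $1/m_1 - 1/(m_1+z)$ of the first block and every factor $1/(m_i+z)$ move simultaneously under $z \mapsto z+1$, and the very convergence of $G_{\balpha}$ rests on the first block's $\tilde{P}$-factor, so the last block cannot legitimately be treated in isolation. Consequently the boundary terms produced by the summation by parts, together with the leftover terms of the form $1/m$ after telescoping, do \emph{not} individually match the target; a last-block-only computation already leaves a spurious $\tilde{P}$-type shape and a boundary contribution $-\bigl(\sum_{k=1}^{w-1}1/k\bigr)/(w+n+1)$. The crux is therefore to show that, once these are summed against the outer blocks, they recombine with no remainder into $G_{\balpha^{-}}$ (respectively $\Delta G_{\balpha^{-}}$). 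Tracking each boundary contribution and matching it against the shifted series is where I expect the real work to lie; the formula \eqref{eq1-30} for the sums $S_{\balpha}$ serves as a useful template for the shape the final answer must take.
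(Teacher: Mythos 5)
Your plan correctly isolates the elementary mechanisms (partial fractions, telescoping, Abel summation), correctly explains why the statement splits into cases (i) and (ii), and your test cases $\balpha=(2)$ and $\balpha=(1,1)$ do check out. But there is a genuine gap, and you name it yourself: the entire proof hinges on the final ``recombination'' of boundary terms, and no mechanism is offered for carrying it out. The obstruction is structural. Passing from $z=n$ to $z=n+1$ moves the factor $1/m_1-1/(m_1+z)$ and every factor $1/(m_i+z)$ simultaneously, so any manipulation localized at the bottom block produces terms in which some factors carry the parameter $n$ and others carry $n+1$. Such mixed-parameter shapes (e.g.\ your $[R(n)-R(n+1)]\,L_w(n+1)$ and the boundary contribution $-\bigl(\sum_{k=1}^{w-1}1/k\bigr)/(w+n+1)$) are not values $G_{\bbeta}(m)$ of any member of the family you are inducting on, so after one step you are outside the class of objects your argument knows how to handle, and there is no closed system in which an induction on blocks (or on $|\balpha|$) can run. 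Asserting that these terms ``should reassemble into'' $G_{\balpha^{-}}(n+1)$, respectively $(\Delta G_{\balpha^{-}})(n)$, is exactly the statement to be proved; for $s=1$ it is a short computation, but the work grows with the number of blocks and nothing in the proposal controls it.

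The paper's proof is built precisely to avoid this. It first regroups the series so that the $z$-carrying variables form weak-inequality chains (the $\tilde{Q}$- and $Q$-factors) and the $z$-free variables form strict chains (the $R$-factors), and then shifts \emph{all} $z$-carrying variables by $+1$ at once: this converts the parameter $n$ into $n+1$ in every factor simultaneously, at the cost of flipping the inequality signs joining the two kinds of chains ($>$ becomes $\ge$ and conversely) and of one extra term coming from rewriting the $\tilde{Q}$-factor. Consequently both $G_{\balpha}(n)$ and $G_{\balpha}(n+1)$ become values of a single linear map $\Phi_{n+1}$ on the space $\Q\mathcal{C}$ of ``inequality-pattern matrices'', all at the same parameter $n+1$, and the proposition reduces to a combinatorial congruence between such matrices modulo $\ker\Phi_{n+1}$ (Lemma \ref{lem2-50}), proved by inclusion--exclusion over inserted $=$ signs together with the partial-fraction identity of Lemma \ref{lem2-40}, which is where the prefactor $1/(n+1)$ is generated. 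The class $\mathcal{C}$ is exactly the enlarged, closed family of shapes that your approach lacks. To complete your proof you would either have to rebuild something equivalent to this calculus, or find another way to control the cross-block boundary terms; as it stands, the proposal establishes only the smallest cases.
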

Proposition \ref{th2-20} is easily proved by induction 
using Proposition \ref{prop2-10}.
\begin{proposition}
 \label{th2-20}
 Let $\balpha$ be a multi-index. Then we have
 \begin{displaymath}
  G_{\balpha}(n) = S_{\balpha^{\backprime}}(n)
 \end{displaymath}
 for any $n \in \N$.
\end{proposition}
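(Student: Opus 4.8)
The plan is to argue by induction on the weight $m=|\balpha|$, using that duality preserves weight so that $|\balpha^{\backprime}|=m$ as well. The guiding principle is that any sequence $a\in\C^{\N}$ is determined by the single value $a(0)$ together with its difference sequence $\Delta a$, since $a(n+1)=a(n)-(\Delta a)(n)$ for all $n$. Because $G_{\balpha}(0)=0=S_{\balpha^{\backprime}}(0)$, it therefore suffices to prove that $(\Delta G_{\balpha})(n)=(\Delta S_{\balpha^{\backprime}})(n)$ for every $n\in\N$.

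For the base case $m=1$ the only multi-index is $\balpha=(1)$, and $(1)^{\backprime}=(1)$. Here $G_{(1)}(n)=\sum_{k\ge 1}\bigl(\tfrac{1}{k}-\tfrac{1}{k+n}\bigr)$ telescopes to $\sum_{k=1}^{n}\tfrac{1}{k}$, which is exactly $S_{(1)}(n)$; so the two sides coincide.

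For the inductive step, fix $\balpha$ with $m\ge 2$ and assume $G_{\bbeta}=S_{\bbeta^{\backprime}}$ for every multi-index $\bbeta$ of weight strictly less than $m$; in particular this hypothesis applies to $\balpha^{-}$. The heart of the argument is to recast \eqref{eq1-30} as a recurrence for $S_{\balpha^{\backprime}}$ that is formally identical to Proposition \ref{prop2-10}. Writing $\gamma_1$ for the first entry of $\balpha^{\backprime}$, I would split according to whether $\gamma_1=1$ or $\gamma_1\ge 2$; via the subset description $\calS_m$ these conditions correspond exactly to $\alpha_s\ge 2$ and $\alpha_s=1$, since the least partial sum of $\balpha^{\tau}$ equals $\alpha_s$, so $1$ lies in $\calS_m(\balpha^{\tau})$ iff $\alpha_s=1$ and in its complement iff $\alpha_s\ge 2$, and $\balpha^{\backprime}=(\balpha^{\tau})^{*}$. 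Applying \eqref{eq1-30} to $S_{\balpha^{\backprime}}$ and using ${}^{-}(\balpha^{\backprime})=(\balpha^{-})^{\backprime}$ from \eqref{eq1-20}, one obtains $(\Delta S_{\balpha^{\backprime}})(n)=\tfrac{-1}{n+1}S_{(\balpha^{-})^{\backprime}}(n+1)$ when $\gamma_1=1$, and $(\Delta S_{\balpha^{\backprime}})(n)=\tfrac{1}{n+1}(\Delta S_{(\balpha^{-})^{\backprime}})(n)$ when $\gamma_1\ge 2$.

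These are precisely the two cases of Proposition \ref{prop2-10} once $G_{\balpha^{-}}$ is replaced by $S_{(\balpha^{-})^{\backprime}}$, so substituting the induction hypothesis $G_{\balpha^{-}}=S_{(\balpha^{-})^{\backprime}}$ yields $(\Delta G_{\balpha})(n)=(\Delta S_{\balpha^{\backprime}})(n)$ in each case, which closes the induction. I expect the main obstacle to lie not in the telescoping identities but in the degenerate length-one situations---where $\balpha^{\backprime}=(m)$, or where ${}^{-}(\balpha^{\backprime})$ drops its first component---in which the convention $S_{\,\cdot\,}(n+1)=1$ of \eqref{eq1-30} must be tracked so that the two parallel recurrences still match term by term; the combinatorial identification of the case split with the size of $\alpha_s$ is the only genuinely non-routine ingredient.
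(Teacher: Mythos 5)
Your proposal is correct and follows essentially the same route as the paper: induction on $|\balpha|$, the case split $\alpha_s\ge 2$ versus $\alpha_s=1$ (equivalently $\alpha^{\backprime}_1=1$ versus $\alpha^{\backprime}_1\ge 2$), combining Proposition \ref{prop2-10} with (\ref{eq1-20}) and (\ref{eq1-30}) to match $(\Delta G_{\balpha})(n)$ with $(\Delta S_{\balpha^{\backprime}})(n)$, and then concluding from $G_{\balpha}(0)=S_{\balpha^{\backprime}}(0)=0$. You merely make explicit some details the paper leaves implicit (the telescoping base case and the $\calS_m$ justification of the case correspondence), which is fine.
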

\begin{proof}
 The proof is by induction on $|\balpha|$. The case $|\balpha| = 1$ 
 is easily seen.
 We assume that $|\balpha| \ge 2$. Let $\balpha=(\alphavec)$ and
 $\balpha^{\backprime} = (\alpha^{\backprime}_1,\ldots,\alpha^{\backprime}_t)$.
 If $\alpha_s \ge 2$ then we have $\alpha^{\backprime}_1 = 1$. 
 By Proposition \ref{prop2-10}, the hypothesis of induction, 
 (\ref{eq1-20}) and (\ref{eq1-30}), we obtain
 \begin{displaymath}
  (\Delta G_{\balpha})(n) = \frac{-1}{n+1} G_{\balpha^{-}}(n+1)
  = \frac{-1}{n+1} S_{{}^{-}\!(\balpha^{\backprime})}(n+1)
  = (\Delta S_{\balpha^{\backprime}})(n).
 \end{displaymath}
 Since $G_{\balpha}(0) = S_{\balpha^{\backprime}}(0) = 0$, it holds that
 $G_{\balpha}(n) = S_{\balpha^{\backprime}}(n)$ for any $n \in \N$. If
 $\alpha_s = 1$ then we have $\alpha^{\backprime}_1 \ge 2$. 
 Also in this case, we obtain 
 $G_{\balpha}(n) = S_{\balpha^{\backprime}}(n)$ 
 for any $n \in \N$ by a similar argument.
 We have thus completed the proof.
\end{proof}
In the rest of this section, we prove Proposition \ref{prop2-10}.
We first give another description of the multiple series $G_{\balpha}(z)$.
For any integer $r \ge 1$,
positive integers $n_1,\ldots,n_r$ and 
$z \in \C \setminus \{-1,-2,\ldots\}$, we write
\begin{align*}
 Q_{r}(n_1,n_2,\ldots,n_{r};z) &= 
 \frac{1}{(n_1+z)(n_2+z) \cdots (n_{r}+z)},\\
 \tilde{Q}_{r}(n_1,n_2,\ldots,n_{r};z) &= 
 \left(\frac{1}{n_1} - \frac{1}{n_1+z} \right) 
 \frac{1}{(n_2+z) \cdots (n_{r}+z)}
 \intertext{and}
 R_{r}(n_1,n_2,\ldots,n_{r}) &= 
 \frac{1}{n_1 n_2 \cdots n_{r}}.
\end{align*}
In addition, we define
\begin{displaymath}
 Q_{0} = R_{0} = 1.
\end{displaymath}
For any multi-index $\balpha=(\alphavec)$,
integers $\beta_1,\ldots,\beta_t \ge 1$,
$\beta'_1,\ldots,\beta'_{t-1} \ge 1$, $\beta'_t \ge 0$ 
are uniquely determined by
\begin{displaymath}
 \balpha = (\underbrace{1,\ldots,1,\beta'_1+1}_{\beta_1},
 \underbrace{1,\ldots,1,\beta'_2+1}_{\beta_2}, \ldots ,
 \underbrace{1,\ldots,1,\beta'_t+1}_{\beta_t}).
\end{displaymath}
By definition, the multiple series $G_{\balpha}(z)$ is written as
\begin{multline*}
 \sum_{\substack{\:\:\:k_1 \ge \cdots \ge l_1 > k'_1 > \cdots > l'_1\\ \ge k_2
 \ge \cdots \ge l_2 > k'_2 > \cdots > l'_2\\ 
 \cdots\cdots \\ 
 \ge k_t \ge \cdots \ge l_t
 > k'_t > \cdots > l'_t}}
 \tilde{Q}_{\beta_1}(k_1,\ldots,l_1;z) 
 R_{\beta'_1}(k'_1,\ldots,l'_1) \\
 \times Q_{\beta_2}(k_2,\ldots,l_2;z) 
 R_{\beta'_2}(k'_2,\ldots,l'_2)
 \cdots Q_{\beta_t}(k_t,\ldots,l_t;z) 
 R_{\beta'_t}(k'_t,\ldots,l'_t),
\end{multline*}
where the sum is taken over all positive integers which satisfy the
condition.
\par
We define more general nested sums, which appear 
in the process of the proof of Proposition \ref{prop2-10}.
Let $\mathcal{C}$ denotes the set of matrices
\begin{displaymath}
 \begin{pmatrix}
            & \gamma_1 & \square'_1 & \gamma'_1 \\
  \square_2 & \gamma_2 & \square'_2 & \gamma'_2 \\
            & \cdots \\
  \square_p & \gamma_p & \square'_p & \gamma'_p \\
 \end{pmatrix}
\end{displaymath}
which satisfy the following conditions:
\begin{itemize}
 \item The number of columns is $4$ and the number of rows is greater
       than or equal to $1$.
 \item The $(1,1)$ entry is empty.
       The other entries in the first and the third columns are the symbols
       $>$, $\ge$ or $=$.
 \item The entries in the second and the fourth columns are non-negative 
       integers and $\gamma_1 \ge 1$.
 \item For $2 \le i \le p$ satisfying $\gamma_i = 0$, we have
       $\square_i = \square'_i$.
 \item For $1 \le i \le p-1$ satisfying $\gamma'_i = 0$, we have
       $\square'_i = \square_{i+1}$.
\end{itemize}
We denote by $\Q\mathcal{C}$ the $\Q$-vector space with basis
$\mathcal{C}$. 
Let $n \in \N$.
We define a $\Q$-linear mapping $\Phi_n \colon \Q\mathcal{C} \to \R$ by putting
\begin{multline*}
 \Phi_n(C) =
 \sum_{\substack{\phantom{\square_1} k_1 \ge \cdots \ge l_1 \square'_1 
 k'_1 > \cdots > l'_1\\ 
 \square_2 k_2 \ge \cdots \ge l_2 \square'_2
 k'_2 > \cdots > l'_2\\ 
 \cdots\cdots \\ 
 \square_p k_p \ge \cdots \ge l_p
 \square'_p k'_p > \cdots > l'_p}}
 \tilde{Q}_{\gamma_1}(k_1,\ldots,l_1;n) R_{\gamma'_1}(k'_1,\ldots,l'_1) \\
 \times Q_{\gamma_2}(k_2,\ldots,l_2;n) R_{\gamma'_2}(k'_2,\ldots,l'_2)
 \cdots Q_{\gamma_p}(k_p,\ldots,l_p;n) R_{\gamma'_p}(k'_p,\ldots,l'_p)
\end{multline*}
for any
\begin{displaymath}
 C =
 \begin{pmatrix}
            & \gamma_1 & \square'_1 & \gamma'_1 \\
  \square_2 & \gamma_2 & \square'_2 & \gamma'_2 \\
  & \cdots \\
  \square_p & \gamma_p & \square'_p & \gamma'_p \\
 \end{pmatrix}
 \in \mathcal{C},
\end{displaymath}
where the sum is taken over all integers which satisfy the condition.
If $\gamma_p \ne 0$ and $\gamma'_p = 0$, for example,
the sum $\Phi_n(C)$ is defined independently of $\square'_p$. 
Now we prove Proposition \ref{prop2-10}.
\par
\medskip
\noindent
\emph{Proof of Proposition \ref{prop2-10}}
 Let $\balpha=(\alphavec)$ be a multi-index with $|\balpha| \ge 2$ 
 and determine integers 
 $\beta_1,\ldots,\beta_t,\beta'_1,\ldots,\beta'_{t-1} \ge 1$, 
 $\beta'_t \ge 0$ by
\begin{displaymath}
 \balpha = (\underbrace{1,\ldots,1,\beta'_1+1}_{\beta_1},
 \underbrace{1,\ldots,1,\beta'_2+1}_{\beta_2}, \ldots ,
 \underbrace{1,\ldots,1,\beta'_t+1}_{\beta_t}).
\end{displaymath}
We prove only the case $\alpha_1=1$ and $\alpha_s \ge 2$
(i.e. $\beta_1 \ge 2$ and $\beta'_t \ge 1$).
The other cases 
(i.e. $\alpha_1 \ge 2$ and $\alpha_s \ge 2$,
$\alpha_1 = 1$ and $\alpha_s = 1$, $\alpha_1 \ge 2$ and $\alpha_s = 1$)
can be proved similarly.
We first give a lemma which is needed later. 
Its proof is immediate from calculations similar to the following:
Let $n_1,\ldots,n_r,m_1,\ldots,m_q$ be positive integers and $n \in \N$.
If $n_{r} = m_1$ then we have
 \begin{multline*}
  R_{r}(n_1,\ldots,n_{r}) Q_{q}(m_1,\ldots,m_q;n+1) \\
  \shoveleft{= \frac{1}{n+1} \times \frac{1}{n_{1} \cdots n_{r-1}}  
  \left(\frac{1}{n_{r}} - \frac{1}{m_1+n+1}\right)
  \frac{1}{(m_2+n+1) \cdots (m_q+n+1)} } \\
  \shoveleft{= \frac{1}{n+1} \Bigl\{R_r(n_1,\ldots,n_r)
  Q_{q - 1}(m_2,\ldots,m_q;n+1) \Bigr. } \\
  \Bigl. - R_{r - 1}(n_1,\ldots,n_{r-1}) Q_q(m_1,\ldots,m_q;n+1) \Bigr\}.
 \end{multline*}
\begin{lemma}
 \label{lem2-40}
 Let 
 \begin{displaymath}
  C =
  \begin{pmatrix}
             & \gamma_1 & \square'_1 & \gamma'_1 \\
   \square_2 & \gamma_2 & \square'_2 & \gamma'_2 \\
   & \cdots \\
   \square_p & \gamma_p & \square'_p & \gamma'_p
  \end{pmatrix}
  \in \mathcal{C}
 \end{displaymath}
 with $\gamma_1 \ge 2$, 
 $\gamma_2,\ldots,\gamma_p,\gamma'_1,\ldots,\gamma'_p \ge 1$
 and let $n \in \N$. \\
 $(\mathrm{i})$ Let $2 \le i \le p$. 
 If $\square_i$ equals to $=$ then we have
 \begin{displaymath}
  C \equiv \frac{1}{n+1} 
  \begin{pmatrix}
             & \gamma_1 & \square'_1 & \gamma'_1 \\
   \square_2 & \gamma_2 & \square'_2 & \gamma'_2 \\
             & \cdots \\
   \triangle_i & \gamma_i - 1 & \square'_i & \gamma'_i \\
             & \cdots \\
   \square_p & \gamma_p & \square'_p & \gamma'_p
  \end{pmatrix}
  - \frac{1}{n+1}
  \begin{pmatrix}
             & \gamma_1 & \square'_1 & \gamma'_1 \\
   \square_2 & \gamma_2 & \square'_2 & \gamma'_2 \\
             & \cdots \\
   \square_{i-1} & \gamma_{i-1} & \square'_{i-1} & \gamma'_{i-1} -1 \\
   \bigcirc_i  & \gamma_i & \square'_i & \gamma'_i \\
             & \cdots \\
   \square_p & \gamma_p & \square'_p & \gamma'_p 
  \end{pmatrix}  
 \end{displaymath}
 modulo the kernel of the linear map 
 $\Phi_{n+1} \colon \Q \mathcal{C} \to \R$,
 where $\triangle_i$ equals to $\ge$ if $\gamma_i \ge 2$ and to $\square'_i$
 if $\gamma_i = 1;$ $\bigcirc_i$ equals to $>$ if $\gamma'_{i-1} \ge 2$ and
 to $\square'_{i-1}$ if $\gamma'_{i-1}=1$.\\
 $(\mathrm{ii})$ Let $1 \le i \le p$. 
 If $\square'_i$ equals to $=$ then we have
 \begin{displaymath}
  C \equiv \frac{1}{n+1} 
  \begin{pmatrix}
             & \gamma_1 & \square'_1 & \gamma'_1 \\
   \square_2 & \gamma_2 & \square'_2 & \gamma'_2 \\
             & \cdots \\
   \square_i & \gamma_i - 1 & \triangle'_i & \gamma'_i \\
             & \cdots \\
   \square_p & \gamma_p & \square'_p & \gamma'_p \\
  \end{pmatrix}
  - \frac{1}{n+1}
  \begin{pmatrix}
             & \gamma_1 & \square'_1 & \gamma'_1 \\
   \square_2 & \gamma_2 & \square'_2 & \gamma'_2 \\
             & \cdots \\
   \square_i & \gamma_i & \bigcirc'_i   & \gamma'_i - 1 \\
             & \cdots \\
   \square_p & \gamma_p & \square'_p & \gamma'_p 
  \end{pmatrix}  
 \end{displaymath}
 modulo the kernel of the linear map 
 $\Phi_{n+1} \colon \Q \mathcal{C} \to \R$,
 where $\triangle'_i$ equals to $\ge$ if $\gamma_i \ge 2$ and to $\square_i$
 if $\gamma_i = 1;$ $\bigcirc'_i$ equals to $>$ if $\gamma'_{i} \ge 2$ and
 to $\square_{i+1}$ if $\gamma'_{i}=1$. 
\end{lemma}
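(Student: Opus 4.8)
The plan is to derive both congruences from the single partial-fraction identity displayed just before the statement, applied \emph{termwise} inside the sum defining $\Phi_{n+1}$. The key observation is that the hypothesis ``$\square_i$ is $=$'' (resp. ``$\square'_i$ is $=$'') forces the two indices straddling that symbol to coincide, which is exactly the hypothesis $n_r = m_1$ of the displayed identity. Thus on each admissible tuple the summand of $\Phi_{n+1}(C)$ splits as $\tfrac{1}{n+1}$ times a difference of two products, and it remains only to recognize the two resulting sums as $\Phi_{n+1}$ of the two matrices on the right-hand side. Since $\Phi_{n+1}$ is $\Q$-linear, carrying out this pointwise identification of summands is precisely the assertion that $C$ is congruent to $\tfrac{1}{n+1}$ times the difference modulo $\ker \Phi_{n+1}$.

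For part $(\mathrm{i})$ I would fix $i$ with $\square_i$ equal to $=$, so that $l'_{i-1} = k_i$ along the summation. The two adjacent factors are $R_{\gamma'_{i-1}}(k'_{i-1},\ldots,l'_{i-1})$ and $Q_{\gamma_i}(k_i,\ldots,l_i;n+1)$, matching $R_r \cdot Q_q$ with $n_r = l'_{i-1} = k_i = m_1$. Applying the identity, the first term keeps $R_{\gamma'_{i-1}}$ and passes to $Q_{\gamma_i-1}$ (dropping $k_i$ from the $Q$-block), while the second keeps $Q_{\gamma_i}$ and passes to $R_{\gamma'_{i-1}-1}$ (dropping $l'_{i-1}$ from the $R$-block). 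In the first term the variable $k_i = l'_{i-1}$ now lies only in the $R$-block of row $i-1$, and the comparison linking it to the new leading index of row $i$ is the interior relation $\ge$ of the old $Q$-block when $\gamma_i \ge 2$, and degenerates to $\square'_i$ when $\gamma_i = 1$ (the $Q$-block becomes $Q_0 = 1$); this is the symbol $\triangle_i$. In the second term $l'_{i-1}$ is deleted, and the comparison linking the new trailing index of row $i-1$ to $k_i$ is the interior relation $>$ of the old $R$-block when $\gamma'_{i-1}\ge 2$, and degenerates to $\square'_{i-1}$ when $\gamma'_{i-1}=1$ (the $R$-block becomes $R_0 = 1$); this is the symbol $\bigcirc_i$. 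Reading off these symbols reproduces exactly the two matrices in the statement.

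Part $(\mathrm{ii})$ is entirely symmetric, using the companion identity for $Q_q(m_1,\ldots,m_q;n+1)\, R_r(n_1,\ldots,n_r)$ when the last index of the $Q$-block equals the first index of the $R$-block; this identity follows from the same partial fraction $\tfrac{1}{l(l+n+1)} = \tfrac{1}{n+1}\bigl(\tfrac{1}{l} - \tfrac{1}{l+n+1}\bigr)$ applied at the junction $l_i = k'_i$, and the symbols $\triangle'_i,\bigcirc'_i$ are read off as above. One small point arises when $i=1$: there the relevant factor is $\tilde Q_{\gamma_1}$ rather than $Q_{\gamma_1}$, but since $\gamma_1 \ge 2$ the index $l_1$ being manipulated lies in the ``tail'' $\tfrac{1}{l_1+n+1}$, while the leading difference $\tfrac{1}{k_1}-\tfrac{1}{k_1+n+1}$ is an untouched spectator, so the same computation goes through with $\tilde Q_{\gamma_1}$ in place of $Q_{\gamma_1}$.

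The substance of the lemma is combinatorial rather than analytic: the only real work is the careful tracking of how the comparison symbols $>,\ge,=$ transform when an index is removed, together with the degenerate cases $\gamma_i=1$ or $\gamma'_{i-1}=1$ (resp. $\gamma_i=1$ or $\gamma'_i=1$) in which a block collapses to $Q_0 = R_0 = 1$ and two formerly separated blocks become adjacent. I expect this bookkeeping --- in particular verifying that the two output matrices genuinely satisfy the defining conditions of $\mathcal{C}$, namely the consistency rules governing empty blocks --- to be the main, though routine, obstacle; the underlying algebraic identity itself is immediate.
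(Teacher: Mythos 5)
Your proposal is correct and follows essentially the same route as the paper: the paper's own proof of Lemma \ref{lem2-40} consists precisely of applying, termwise inside the nested sums, the partial-fraction identity for $R_r \cdot Q_q$ (and its $Q\cdot R$ and $\tilde{Q}\cdot R$ analogues) displayed just before the lemma, with the symbols $\triangle_i$, $\bigcirc_i$, $\triangle'_i$, $\bigcirc'_i$ arising exactly from the bookkeeping of degenerate blocks $Q_0=R_0=1$ that you describe. Your treatment of the $i=1$ case with $\tilde{Q}_{\gamma_1}$ as a spectator factor (valid since $\gamma_1\ge 2$) fills in a detail the paper leaves implicit.
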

In the following, we calculate $(\Delta G_{\balpha})(n)$.
By definition, we have
\begin{multline}
 \Phi_n(
 \begin{pmatrix}
       & \beta_1 & > & \beta'_1 \\
   \ge & \beta_2 & > & \beta'_2 \\
       & \cdots \\
   \ge & \beta_t & > & \beta'_t
 \end{pmatrix}
 ) \\
 \shoveleft{=\sum_{\substack{\phantom{>} k_1 \ge \cdots \ge l_1 
 \ge k'_1 > \cdots > l'_1\\ 
 > k_2 \ge \cdots \ge l_2 \ge k'_2 > \cdots > l'_2\\ 
 \cdots\cdots\\ 
 > k_t \ge \cdots \ge l_t \ge k'_t > \cdots > l'_t}}
 \tilde{Q}_{\beta_1}(k_1+1,\ldots,l_1+1;n) 
 R_{\beta'_1}(k'_1,\ldots,l'_1) } \\
 \times Q_{\beta_2}(k_2+1,\ldots,l_2+1;n) 
 R_{\beta'_2}(k'_2,\ldots,l'_2)
 \cdots Q_{\beta_t}(k_t+1,\ldots,l_t+1;n) 
 R_{\beta'_t}(k'_t,\ldots,l'_t). \label{eq2-20}
\end{multline}
Since each term
\begin{multline*}
 \tilde{Q}_{\beta_1}(k_1+1,m_1+1,\ldots,l_1+1;n) 
 R_{\beta'_1}(k'_1,\ldots,l'_1) \\
 \shoveright{\times Q_{\beta_2}(k_2+1,\ldots,l_2+1;n) 
 R_{\beta'_2}(k'_2,\ldots,l'_2) 
 \cdots 
 Q_{\beta_t}(k_t+1,\ldots,l_t+1;n) 
 R_{\beta'_t}(k'_t,\ldots,l'_t) } \\
\end{multline*}
is written as
\begin{multline*}
 \left(\frac{1}{k_1+1} - \frac{1}{k_1} + 
 \frac{1}{k_1} - \frac{1}{k_1+n+1} \right)
 Q_{\beta_1-1}(m_1,\ldots,l_1;n+1) 
 R_{\beta'_1}(k'_1,\ldots,l'_1) \\
 \times Q_{\beta_2}(k_2,\ldots,l_2;n+1) 
 R_{\beta'_2}(k'_2,\ldots,l'_2) 
 \cdots 
 Q_{\beta_t}(k_t,\ldots,l_t;n+1) 
 R_{\beta'_t}(k'_t,\ldots,l'_t),
\end{multline*}
the right-hand side of (\ref{eq2-20}) is equal to
\begin{displaymath}
 \Phi_{n+1}(
 \begin{pmatrix}
       & \beta_1 & \ge & \beta'_1 \\
   >   & \beta_2 & \ge & \beta'_2 \\
       & \cdots \\
   >   & \beta_t & \ge & \beta'_t
 \end{pmatrix}
 ) - \frac{1}{n+1} \Phi_{n+1}(
 \begin{pmatrix}
       & \beta_1 - 1 & \ge & \beta'_1 \\
   >   & \beta_2     & \ge & \beta'_2 \\
       & \cdots \\
   >   & \beta_t     & \ge & \beta'_t
 \end{pmatrix}
 ).
\end{displaymath}
Therefore we see that
\begin{multline*}
 (\Delta G_{\balpha})(n) = \Phi_{n+1}(
 \begin{pmatrix}
       & \beta_1 & \ge & \beta'_1 \\
   >   & \beta_2 & \ge & \beta'_2 \\
       & \cdots \\
   >   & \beta_t & \ge & \beta'_t
 \end{pmatrix}
 - 
 \begin{pmatrix}
       & \beta_1 & > & \beta'_1 \\
   \ge & \beta_2 & > & \beta'_2 \\
       & \cdots \\
   \ge & \beta_t & > & \beta'_t
 \end{pmatrix}
 ) \\
 - \frac{1}{n+1} \Phi_{n+1}(
 \begin{pmatrix}
       & \beta_1 - 1 & \ge & \beta'_1 \\
   >   & \beta_2     & \ge & \beta'_2 \\
       & \cdots \\
   >   & \beta_t     & \ge & \beta'_t
 \end{pmatrix}
 ). 
 \end{multline*}
This equation and the following Lemma \ref{lem2-50} imply 
Proposition \ref{prop2-10}.
\begin{lemma}
 \label{lem2-50}
 Let $p$ be a positive integer and $n \in \N$.
 For any integer's $\gamma_1 \ge 2$,
 $\gamma_2,\ldots,\gamma_p,\gamma'_1,\ldots,\gamma'_p \ge 1$, we have
 \begin{multline*} 
  \begin{pmatrix}
       & \gamma_1 & \ge & \gamma'_1 \\
   >   & \gamma_2 & \ge & \gamma'_2 \\
       & \cdots \\
   >   & \gamma_p & \ge & \gamma'_p
  \end{pmatrix}
   -
  \begin{pmatrix}
       & \gamma_1 & > & \gamma'_1 \\
   \ge & \gamma_2 & > & \gamma'_2 \\
       & \cdots \\
   \ge & \gamma_p & > & \gamma'_p
  \end{pmatrix} \\
  \equiv \frac{1}{n+1} 
  \begin{pmatrix}
       & \gamma_1 - 1 & \ge & \gamma'_1 \\
   >   & \gamma_2 & \ge & \gamma'_2 \\
       & \cdots \\
   >   & \gamma_p & \ge & \gamma'_p
  \end{pmatrix} 
  - \frac{1}{n+1}
  \begin{pmatrix}
       & \gamma_1 & > & \gamma'_1 \\
   \ge & \gamma_2 & > & \gamma'_2 \\
       & \cdots \\
   \ge & \gamma_{p-1} & > & \gamma'_{p-1} \\
   \ge & \gamma_p & > & \gamma'_p - 1
  \end{pmatrix} 
 \end{multline*}
 modulo the kernel of the linear map
 $\Phi_{n+1} \colon \Q \mathcal{C} \to \R$. 
\end{lemma}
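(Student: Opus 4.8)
The plan is to derive Lemma~\ref{lem2-50} from Lemma~\ref{lem2-40} by a telescoping argument. Write $A$ and $B$ for the first and second matrices on the left-hand side, so that $A$ carries the separators $\square_i = {>}$ (for $2 \le i \le p$) and $\square'_i = {\ge}$ (for $1 \le i \le p$), while $B$ carries $\square_i = {\ge}$ and $\square'_i = {>}$. List the separators in the order $\square'_1, \square_2, \square'_2, \ldots, \square_p, \square'_p$ in which they occur along the chain of inequalities defining $\Phi_{n+1}$, and interpolate from $A$ to $B$ by replacing, one separator at a time and in this order, its $A$-value by its $B$-value. Under $\Phi_{n+1}$ a separator $\ge$ is the sum of the corresponding $>$ and the corresponding $=$ (the summand is insensitive to the separator symbol, only the range of summation changes), so each single replacement alters the class modulo $\ker \Phi_{n+1}$ by $\pm$ one matrix in which exactly one separator equals $=$: flipping some $\square'_i$ from $\ge$ to $>$ contributes $+$ the matrix with $\square'_i = {=}$, and flipping some $\square_i$ from $>$ to $\ge$ contributes $-$ the matrix with $\square_i = {=}$, all other separators being frozen at their current values (the $B$-values for separators earlier in the list, the $A$-values for those later). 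This writes $A-B$, modulo $\ker\Phi_{n+1}$, as an alternating sum of $2p-1$ of these ``$=$-matrices.''

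I would then apply Lemma~\ref{lem2-40} to each $=$-matrix. Part~(ii) at $\square'_i = {=}$ yields a ``$+$'' term lowering $\gamma_i$ by one and a ``$-$'' term lowering $\gamma'_i$ by one; part~(i) at $\square_i = {=}$ yields a ``$+$'' term lowering $\gamma_i$ and a ``$-$'' term lowering $\gamma'_{i-1}$. Grouping the resulting matrices by which numerical entry they lower, I expect extensive cancellation: for each $2 \le i \le p$ the term lowering $\gamma_i$ obtained from the $\square'_i$-flip (via (ii)) and the one obtained from the $\square_i$-flip (via (i)) are equal elements of $\Q\mathcal{C}$ and enter with opposite signs, and for each $1 \le i \le p-1$ the two terms lowering $\gamma'_i$, coming from the $\square'_i$-flip and the $\square_{i+1}$-flip, likewise cancel. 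The only terms with no partner are the lowering of $\gamma_1$, produced solely by the $\square'_1$-flip, and the lowering of $\gamma'_p$, produced solely by the $\square'_p$-flip; using $\gamma_1 \ge 2$ (so that $\triangle'_1 = {\ge}$) and the fact that $\square'_p$ is immaterial once $\gamma'_p$ has dropped to $0$, these survivors are exactly $\frac{1}{n+1}$ times the first matrix on the right-hand side and $-\frac{1}{n+1}$ times the second, which is the assertion of the lemma.

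The heart of the argument, and the step I expect to be most delicate, is verifying that each cancelling pair really consists of identical elements of $\Q\mathcal{C}$ after the substitutions of Lemma~\ref{lem2-40}. For the pair lowering $\gamma_i$ one checks that $\square_i$ is frozen at $\ge$ on both sides (a $B$-value on the $\square'_i$-flip side, the value $\triangle_i$ on the $\square_i$-flip side) and that $\square'_i$ is frozen at $\ge$ on both sides, all other separators agreeing automatically; the $\gamma'_i$-pairs are handled the same way with the relevant separators frozen at $>$. The cases $\gamma_i = 1$ and $\gamma'_i = 1$ require care, since there Lemma~\ref{lem2-40} installs the inherited symbols $\triangle_i = \square'_i$, $\triangle'_i = \square_i$, $\bigcirc_i = \square'_{i-1}$ and $\bigcirc'_i = \square_{i+1}$ rather than the generic $\ge$ or $>$; one must confirm both that these values still make the matrix lie in $\mathcal{C}$ (so that the constraints $\square_i = \square'_i$ when $\gamma_i = 0$ and $\square'_i = \square_{i+1}$ when $\gamma'_i = 0$ hold) and that, once the frozen context is taken into account, the two members of each pair still coincide. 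Granting this bookkeeping, the telescoping collapses to the two boundary terms and the proof is complete.
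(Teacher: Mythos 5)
Your proposal is correct and follows essentially the paper's own proof: both arguments reduce the difference $A-B$ of the two matrices on the left-hand side, modulo $\ker\Phi_{n+1}$, to the identical alternating sum of $2p-1$ single-$=$ matrices (your flip of $\square'_i$ produces, with sign $+$, the paper's term with $=$ in the third column of row $i$, and your flip of $\square_i$ produces, with sign $-$, the term with $=$ in the first column of row $i$), and both then finish by applying Lemma~\ref{lem2-40} to each such term and telescoping down to the two surviving boundary terms exactly as you describe. The only difference is how that intermediate congruence is obtained --- the paper expands the auxiliary all-$\ge$ matrix in two different orders and cancels the resulting double sums against each other, while you interpolate from $A$ to $B$ one separator at a time --- which is a mild streamlining of the same underlying idea ($\ge$ splits as $>$ plus $=$).
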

\begin{proof}
 In the proof, for example, we denote
 \begin{displaymath}
  \begin{pmatrix}
       & \gamma_1     & \ge & \gamma'_1 \\
   \ge & \gamma_2     & \ge & \gamma'_2 \\
       & \cdots \\
   \ge & \gamma_{i-1} & \ge & \gamma'_{i-1} \\
   =   & \gamma_i     & \ge & \gamma'_i \\
   >   & \gamma_{i+1} & \ge & \gamma'_{i+1} \\
       & \cdots \\
   >   & \gamma_p     & \ge & \gamma'_p
  \end{pmatrix} \quad \text{by} \quad
  \begin{pmatrix}
   \ge &          & \ge \\
   =   & \gamma_i & \ge & \gamma'_i \\
   >   &          & \ge
  \end{pmatrix}
 \end{displaymath}
 for abbreviation. 
 We easily see that
 \begin{multline*}
  \begin{pmatrix}
       & \gamma_1 & \ge & \gamma'_1 \\
   \ge & \gamma_2 & \ge & \gamma'_2 \\
       & \cdots \\
   \ge & \gamma_p & \ge & \gamma'_p
  \end{pmatrix}
  \equiv
  \begin{pmatrix}
       & \gamma_1 & \ge & \gamma'_1 \\
   >   & \gamma_2 & \ge & \gamma'_2 \\
       & \cdots \\
   >   & \gamma_p & \ge & \gamma'_p
  \end{pmatrix}  
  + \sum_{i=2}^p
  \begin{pmatrix}
   \ge &          & \ge \\
   =   & \gamma_i & \ge & \gamma'_i \\
   >   &          & \ge
  \end{pmatrix} \\
  \equiv
  \begin{pmatrix}
       & \gamma_1 & \ge & \gamma'_1 \\
   >   & \gamma_2 & \ge & \gamma'_2 \\
       & \cdots \\
   >   & \gamma_p & \ge & \gamma'_p
  \end{pmatrix}  
  + \sum_{i=2}^p
  \begin{pmatrix}
   \ge &          & > \\
   =   & \gamma_i & \ge & \gamma'_i \\
   >   &          & \ge
  \end{pmatrix}  
  + \sum_{p \ge i > j \ge 1}
  \begin{pmatrix}
   \ge &          & > \\
   \ge & \gamma_j & = & \gamma'_j \\
   \ge &          & \ge \\
   =   & \gamma_i & \ge & \gamma'_i \\
   >   &          & \ge
  \end{pmatrix}    
 \end{multline*}
 and
 \begin{multline*}
  \begin{pmatrix}
       & \gamma_1 & \ge & \gamma'_1 \\
   \ge & \gamma_2 & \ge & \gamma'_2 \\
       & \cdots \\
   \ge & \gamma_p & \ge & \gamma'_p
  \end{pmatrix}
  \equiv
  \begin{pmatrix}
       & \gamma_1 & > & \gamma'_1 \\
   \ge & \gamma_2 & > & \gamma'_2 \\
       & \cdots \\
   \ge & \gamma_p & > & \gamma'_p
  \end{pmatrix}  
  + \sum_{i=1}^p
  \begin{pmatrix}
   \ge &          & > \\
   \ge & \gamma_i & = & \gamma'_i \\
   \ge &          & \ge
  \end{pmatrix} \\
  \equiv
  \begin{pmatrix}
       & \gamma_1 & > & \gamma'_1 \\
   \ge & \gamma_2 & > & \gamma'_2 \\
       & \cdots \\
   \ge & \gamma_p & > & \gamma'_p
  \end{pmatrix}  
  + \sum_{i=1}^p
  \begin{pmatrix}
   \ge &          & > \\
   \ge & \gamma_i & = & \gamma'_i \\
   >   &          & \ge
  \end{pmatrix}  
  + \sum_{p \ge j > i \ge 1}
  \begin{pmatrix}
   \ge &          & > \\
   \ge & \gamma_i & = & \gamma'_i \\
   \ge &          & \ge \\
   =   & \gamma_j & \ge & \gamma'_j \\
   >   &          & \ge
  \end{pmatrix}.
 \end{multline*}
 From these we obtain the congruence
 \begin{multline*}
  \begin{pmatrix}
       & \gamma_1 & \ge & \gamma'_1 \\
   >   & \gamma_2 & \ge & \gamma'_2 \\
       & \cdots \\
   >   & \gamma_p & \ge & \gamma'_p
  \end{pmatrix}
   -
  \begin{pmatrix}
       & \gamma_1 & > & \gamma'_1 \\
   \ge & \gamma_2 & > & \gamma'_2 \\
       & \cdots \\
   \ge & \gamma_p & > & \gamma'_p
  \end{pmatrix} \\
  \equiv \sum_{i=1}^p
  \begin{pmatrix}
   \ge & & > \\
   \ge & \gamma_i & = & \gamma'_i \\
   > & & \ge
  \end{pmatrix}  
  - \sum_{i=2}^p
  \begin{pmatrix}
   \ge & & > \\
   = & \gamma_i & \ge & \gamma'_i \\
   > & & \ge
  \end{pmatrix},
 \end{multline*}
 which together with Lemma \ref{lem2-40} proves the lemma.
\end{proof}
\section{Multiple series expressions for the Newton series which
 interpolate finite multiple harmonic sums} \label{sec3}
We start with the definition of the Newton series.
Let $a \in \C^{\N}$ be a sequence and $z$ a complex variable.
The series
\begin{displaymath}
 f(z) = \sum_{n=0}^{\infty}(-1)^n (\nabla a)(n) \binom{z}{n}, \quad
 \binom{z}{n} = \frac{z(z-1)\cdots(z-n+1)}{n!}
\end{displaymath}
is called the Newton series which interpolate a sequence $a$,
where the definition of $\nabla a$ is given in Definition \ref{df1-20}.
In fact, we have $f(n) = a(n)$ for any $n \in \N$. 
(See \cite[Section 4]{K}.)
For the convergence of the Newton series, the following is known
(see for instance \cite[Section 10.6]{M}):
there exists some $\rho \in \R \cup \{\pm \infty\}$ such that 
the series $f(z)$ converges for any $z \in \C$ with $\re z > \rho$
and diverges for any $z \in \C \setminus \N$ with $\re z < \rho$.
Clearly, such a $\rho$ is uniquely determined.
This $\rho$ is called the abscissa of convergence of the Newton series
$f(z)$.
In the half-plane $\re z > \rho$, the function $f(z)$ is holomorphic.
\par
Let $\balpha$ be a multi-index.
In this section, we consider the Newton series
\begin{displaymath}
 F_{\balpha}(z) = \sum_{n=0}^{\infty}(\nabla S_{\balpha})(n) \binom{z}{n}
\end{displaymath}
which interpolate the sequence $S_{\balpha}$.
The following Proposition \ref{prop3-10} provides 
the abscissa of convergence of $F_{\balpha}(z)$.
\begin{proposition}
 \label{prop3-10}
 Let $\balpha$ be a multi-index and let 
 $\balpha^{*}=(\alpha^{*}_1,\ldots,\alpha^{*}_t)$.
 Then the abscissa of convergence of $F_{\balpha}(z)$ is equal to 
 $-\alpha^{*}_1$.
\end{proposition}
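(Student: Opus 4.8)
The plan is to compute the coefficient sequence $(\nabla S_{\balpha})(n)$ in closed form and then obtain $\rho$ by comparison with an ordinary Dirichlet series. Everything rests on the claim that, writing $\balpha^{*}=(\alpha^{*}_1,\ldots,\alpha^{*}_t)$,
\[
 (\nabla S_{\balpha})(n)=-\frac{1}{n^{\alpha^{*}_1}}\,S_{(\alpha^{*}_2,\ldots,\alpha^{*}_t)}(n),\qquad n\ge 1,
\]
with the convention $S_{\emptyset}\equiv 1$ when $t=1$ (i.e. $\balpha=(1,\ldots,1)$). I would prove this by induction on $|\balpha|$, the base case $\balpha=(1)$ being the classical evaluation $(\nabla S_{(1)})(n)=-1/n$.

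The first move in the inductive step strips the leading entry of $\balpha$. From the operator identity $\nabla\Delta=E\nabla$ (where $E$ is the shift $(Ea)(n)=a(n+1)$, immediate from $(\nabla a)(n)=(\Delta^{n}a)(0)$) together with (\ref{eq1-30}), one gets $(\nabla S_{\balpha})(n)=(\nabla g)(n-1)$ with $g(k)=-(k+1)^{-\alpha_1}S_{(\alpha_2,\ldots,\alpha_s)}(k+1)$. Using the elementary identity $\binom{n-1}{k}\frac{1}{k+1}=\frac{1}{n}\binom{n}{k+1}$ and shifting the index gives the uniform reduction
\[
 (\nabla S_{\balpha})(n)=\frac{1}{n}\,(\nabla\tilde h)(n),\qquad \tilde h(i)=i^{-(\alpha_1-1)}S_{(\alpha_2,\ldots,\alpha_s)}(i).
\]
If $\alpha_1=1$ this reads $(\nabla S_{\balpha})(n)=\frac1n(\nabla S_{{}^{-}\!\balpha})(n)$, and the induction hypothesis for ${}^{-}\!\balpha$ together with $({}^{-}\!\balpha)^{*}={}^{-}\!(\balpha^{*})$ (valid since $\alpha_1=1\Leftrightarrow\alpha^{*}_1\ge 2$, see Section~\ref{sec1}) immediately yields the claim.

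The case $\alpha_1\ge 2$ is where I expect the real work. Here (\ref{eq1-30}) shows $\tilde h(i)=-(\Delta S_{{}^{-}\!\balpha})(i-1)$, so $S_{{}^{-}\!\balpha}$ is exactly the partial-sum sequence of $\tilde h$. A short finite-difference computation, using $\Delta S_{{}^{-}\!\balpha}=-E\tilde h$ and the companion identity $\nabla E=\Delta\nabla$, telescopes to $(\nabla\tilde h)(m)=\sum_{k=1}^{m}(\nabla S_{{}^{-}\!\balpha})(k)$. Feeding in the induction hypothesis for ${}^{-}\!\balpha$ and then reading the resulting sum backwards through the defining recursion $S_{\bbeta}(m)=\sum_{k=1}^{m}k^{-\beta_1}S_{(\beta_2,\ldots)}(k)$ collapses it to $-S_{({}^{-}\!\balpha)^{*}}(m)$; as $\alpha^{*}_1=1$ and ${}^{-}\!(\balpha^{*})=(\alpha^{*}_2,\ldots,\alpha^{*}_t)$ in this case, this is precisely the asserted formula. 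This identification of $\nabla\tilde h$ with $-S_{({}^{-}\!\balpha)^{*}}$ — juggling the weights $(k+1)^{-\alpha_1}$, the cumulative summation, and the duality $*$ simultaneously — is the step I expect to demand the most care.

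Granting the formula, the abscissa is routine. The factor $S_{(\alpha^{*}_2,\ldots,\alpha^{*}_t)}(n)$ is positive and subpolynomial (it is $O((\log n)^{t-1})$), so $(\nabla S_{\balpha})(n)$ has constant sign; hence in the general term $(-1)^{n}(\nabla S_{\balpha})(n)\binom{z}{n}$ the factor $(-1)^{n}$ cancels the sign of $\binom{z}{n}=(-1)^{n}\Gamma(n-z)/(n!\,\Gamma(-z))$. With $\Gamma(n-z)/\Gamma(n+1)=n^{-z-1}(1+O_{z}(n^{-1}))$ the term becomes $-\Gamma(-z)^{-1}S_{(\alpha^{*}_2,\ldots,\alpha^{*}_t)}(n)\,n^{-(z+\alpha^{*}_1+1)}(1+O_{z}(n^{-1}))$, so up to a nonzero constant and an absolutely convergent remainder $F_{\balpha}(z)$ is the Dirichlet series $\sum_{n}S_{(\alpha^{*}_2,\ldots,\alpha^{*}_t)}(n)\,n^{-s}$ in $s=z+\alpha^{*}_1+1$, with positive coefficients. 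By Landau's theorem its abscissa of convergence is $\limsup_{n}\log\!\big(\sum_{k\le n}S_{(\alpha^{*}_2,\ldots,\alpha^{*}_t)}(k)\big)/\log n=1$ (the partial sums growing like $n$ times a power of $\log n$), i.e. $\re s=1$, i.e. $\re z=-\alpha^{*}_1$; therefore $\rho=-\alpha^{*}_1$. That the binomial series and this Dirichlet comparison share the same abscissa is precisely the convergence theory of \cite[Section~10.6]{M}; alternatively one may apply the Cahen-type tail formula $\rho=\limsup_{n}\log|\sum_{k\ge n}c_{k}|/\log n$ directly to the coefficients $c_{n}=(-1)^{n}(\nabla S_{\balpha})(n)$.
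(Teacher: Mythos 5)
Your proposal is correct, but it does not follow the paper's route, because the paper has no internal proof at all: Proposition \ref{prop3-10} is disposed of by citing \cite[Proposition 5.1]{K} (with a remark that the statement there fails for $r=0$ though its proof is valid). What you have written is, in substance, a reconstruction of that outsourced proof. Its core, the identity $(\nabla S_{\balpha})(n)=-n^{-\alpha^{*}_{1}}S_{(\alpha^{*}_{2},\ldots,\alpha^{*}_{t})}(n)$ for $n\ge 1$, is Kawashima's binomial-transform duality for finite multiple harmonic sums, and your induction does prove it: both branches close, using $\nabla\Delta=E\nabla$, $\nabla E=\Delta\nabla$, the identity $\binom{n-1}{k}\frac{1}{k+1}=\frac{1}{n}\binom{n}{k+1}$, the recursion $S_{\bbeta}(m)=\sum_{k=1}^{m}k^{-\beta_{1}}S_{(\beta_{2},\ldots)}(k)$, the compatibility $({}^{-}\!\balpha)^{*}={}^{-}\!(\balpha^{*})$ from Section \ref{sec1}, and the equivalence $\alpha_{1}=1\Leftrightarrow\alpha^{*}_{1}\ge 2$; it is, pleasingly, the same kind of difference-operator induction the paper runs for $G_{\balpha}$ in Section \ref{sec2}. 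The abscissa step is also sound: because the coefficients have fixed sign, writing $(-1)^{n}\binom{z}{n}=\Gamma(n-z)/(n!\,\Gamma(-z))$ and $\Gamma(n-z)/\Gamma(n+1)=n^{-z-1}(1+O_{z}(n^{-1}))$ converts $F_{\balpha}(z)$ into a nonnegative-coefficient Dirichlet series in $s=z+\alpha^{*}_{1}+1$ of abscissa exactly $1$, plus a remainder absolutely convergent for $\re s>0$; convergence for $\re s>1$ and divergence at real $z$ with $s\in(0,1)$ then force $\rho=-\alpha^{*}_{1}$, the existence of the convergence/divergence dichotomy being the fact from \cite[Section 10.6]{M} that the paper already quotes. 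Relative to the paper, your version buys a self-contained proof; the paper buys brevity by deferring to \cite{K}.

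Three small repairs, none fatal. (i) For $\alpha_{1}\ge 2$ your $\tilde h(i)=i^{-(\alpha_{1}-1)}S_{(\alpha_{2},\ldots,\alpha_{s})}(i)$ is undefined at $i=0$; you must set $\tilde h(0)=0$, which is exactly what makes $(\nabla S_{\balpha})(n)=\frac{1}{n}(\nabla\tilde h)(n)$ and the telescoping seed $(\nabla\tilde h)(0)=0$ literally correct. (ii) The limsup formula you invoke is Cahen's formula, not Landau's theorem; what makes it usable at complex $s$ is the elementary fact that a Dirichlet series with nonnegative coefficients has equal abscissae of convergence and of absolute convergence. (iii) The closing aside is off: with your normalization $c_{n}=(-1)^{n}(\nabla S_{\balpha})(n)$ the tail formula does not apply as written, and even for the natural coefficients $(\nabla S_{\balpha})(n)$ the Newton abscissa is $\limsup_{n}\log\bigl|\sum_{k\ge n}(\nabla S_{\balpha})(k)\bigr|/\log n\,-\,1$, the shift by one coming from $(-1)^{n}\binom{z}{n}\sim n^{-z-1}/\Gamma(-z)$. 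Since the aside is an alternative you never use, your main argument stands.
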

For a proof of Proposition \ref{prop3-10}, see \cite[Proposition 5.1]{K}.
(Although Proposition 5.1 in \cite{K} is false for $r=0$, its proof is valid.)
Our objective in this paper is to prove that
\begin{displaymath}
 F_{\balpha}(z) = G_{\balpha^{\backprime}}(z)
\end{displaymath}
for any $\re z > -\alpha_1^{*}$.
For this purpose we use the following proposition.
\begin{proposition}
 \label{prop3-20}
 Let $a$, $b \in \C^{\N}$ be sequences and let $\rho \in \R$. 
 We suppose that the Newton series
 \begin{displaymath}
  f(z) = \sum_{n=0}^{\infty} (-1)^n (\nabla a) \binom{z}{n}
  \quad \text{and} \quad
  g(z) = \sum_{n=0}^{\infty} (-1)^n (\nabla b) \binom{z}{n}
 \end{displaymath}
 converge for any $z \in \C$ with $\re z > \rho$.
 If there exists $N \in \N$ such that $f(n)=g(n)$ for any $n \in \N$ with
 $n \ge N$, then we have $f(z)=g(z)$ for any $\re z > \rho$.
\end{proposition}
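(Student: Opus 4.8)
The plan is to set $h := f - g$ and to show that $h$ vanishes identically on the half-plane $\re z > \rho$. Since $f(n) = a(n)$ and $g(n) = b(n)$ for every $n \in \N$, the linearity of $\nabla$ shows that $h$ is itself a convergent Newton series on $\re z > \rho$, namely
\[
 h(z) = \sum_{n=0}^{\infty}(-1)^n (\nabla(a-b))(n)\binom{z}{n},
\]
and it is holomorphic there, being the difference of two holomorphic functions. By hypothesis its integer values satisfy $h(n) = (a-b)(n) = 0$ for all $n \ge N$; equivalently, the sequence $c := a - b$ is supported on $\{0,1,\ldots,N-1\}$. Thus the whole problem reduces to the rigidity statement that a convergent Newton series whose interpolated sequence has finite support must vanish on its half-plane of convergence.

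To establish this I would again exploit linearity, reducing to indicator sequences. Writing $c = \sum_{j=0}^{N-1} c(j)\,e_j$ with $e_j(k) = \delta_{jk}$, and using that both $\nabla$ and the formation of the Newton series are linear, I obtain $h = \sum_{j=0}^{N-1} c(j)\,h_j$, where $h_j$ is the Newton series interpolating $e_j$. Since this is a finite linear combination, no issue of interchanging sums arises on any half-plane where every $h_j$ converges. A direct computation gives $(\nabla e_j)(n) = (\Delta^n e_j)(0) = (-1)^j\binom{n}{j}$, so that, using the identity $\binom{z}{n}\binom{n}{j} = \binom{z}{j}\binom{z-j}{n-j}$ and reindexing by $m = n-j$,
\[
 h_j(z) = \binom{z}{j}\sum_{m=0}^{\infty}(-1)^m\binom{z-j}{m}.
\]

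The key analytic input is the evaluation $\sum_{m=0}^{\infty}(-1)^m\binom{w}{m} = 0$ valid for $\re w > 0$: the binomial series $\sum_m \binom{w}{m}x^m = (1+x)^w$ converges at the endpoint $x = -1$ precisely when $\re w > 0$, and there it sums to $0$ by Abel's theorem. Applying this with $w = z - j$ shows $h_j(z) = 0$ for $\re z > j$, and hence $h(z) = 0$ for $\re z > \max(\rho, N-1)$.

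The only genuine obstacle I anticipate is passing from this smaller half-plane back to the full domain of convergence, since the computation yields vanishing only on $\re z > N-1$, which may be a proper sub-half-plane of $\re z > \rho$. Here I would invoke the identity theorem for holomorphic functions: $h$ is holomorphic on the connected open set $\{\re z > \rho\}$ and vanishes on the nonempty open subset $\{\re z > \max(\rho,N-1)\}$, so it vanishes identically on $\{\re z > \rho\}$. This yields $f(z) = g(z)$ for all $\re z > \rho$, as required.
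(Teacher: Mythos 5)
Your proof is correct, but it takes a more self-contained route than the paper. Both arguments make the same initial reduction: the difference sequence $c = a-b$ is supported on $\{0,\ldots,N-1\}$, so everything hinges on showing that the Newton series attached to a finitely supported sequence vanishes on a right half-plane. At that point the paper simply observes that $(\nabla c)(n) = \sum_{k=0}^{N-1}(-1)^k\binom{n}{k}c(k)$ is a \emph{polynomial} $P(n)$ of degree at most $N$, and quotes the classical fact (Milne-Thomson, N\"orlund) that $\sum_{n}(-1)^n Q(n)\binom{z}{n}$ for a polynomial $Q$ of degree $d$ has abscissa of convergence exactly $d$ and vanishes for $\re z > d$; note that the abscissa part of this citation is what lets the paper conclude directly on all of $\re z > \rho$, since convergence of the difference series on $\re z > \rho$ forces $d \le \rho$, so no identity-theorem step is needed. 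Your argument replaces the citation with an explicit computation: decomposing $c$ into indicator sequences $e_j$ is the same thing as expanding $P$ in the binomial-coefficient basis $(-1)^j\binom{n}{j}$, and your evaluation $h_j(z) = \binom{z}{j}\sum_{m\ge 0}(-1)^m\binom{z-j}{m} = 0$ for $\re z > j$, via Abel's theorem applied to $(1+x)^w$ at $x=-1$, is in effect a proof of the vanishing half of the cited result. The price you pay for not having the abscissa statement is that your vanishing region $\re z > \max(\rho, N-1)$ may be strictly smaller than $\re z > \rho$, which you correctly repair with the identity theorem on the connected domain $\{\re z > \rho\}$. In short: the paper's proof is shorter and leans on a classical reference; yours is elementary and self-contained, at the modest cost of one extra analytic-continuation step.
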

\begin{proof}
 Since the sequence $c(n) := a(n)-b(n)$ vanishes for any $n \ge N$,
 there exists some polynomial $P$ with degree at most $N$ such that
 $(\nabla c)(n) = P(n)$ for any $n \in \N$.
 Therefore the assertion follows from the following fact:
 For a polynomial $Q$ with degree $d$, the Newton series
 \begin{displaymath}
  \sum_{n=0}^{\infty} (-1)^n Q(n) \binom{z}{n}
 \end{displaymath}
 has the abscissa of convergence $d$ and vanishes in the region 
 $\re z > d$ (see \cite[Section 10.6]{M}).
\end{proof}
If the function $G_{\balpha^{\backprime}}(z)$ is expressed
by a Newton series in some half-plane $\re z > \sigma$ 
($\sigma \in \R$),
we obtain the equation
\begin{displaymath}
 F_{\balpha}(z) = G_{\balpha^{\backprime}}(z)
\end{displaymath}
for any $\re z > \max\{-\alpha_1^{*},\sigma\}$
by Propositions \ref{th2-20} and \ref{prop3-20}.
This equation is still valid for any $\re z > -\alpha_1^{*}$ 
since the functions $F_{\balpha}(z)$ and $G_{\balpha^{\backprime}}(z)$
are both holomorphic in $\re z > -\alpha_1^{*}$.
Therefore we have only to show that the function 
$G_{\balpha^{\backprime}}(z)$ is expressed by a Newton series in some
half-plane.
This follows easily from Proposition \ref{prop3-30}.
Before stating Proposition \ref{prop3-30}, 
we remark some properties of a function used in the proposition.
The following function
\begin{displaymath}
 \psi(\theta) = \cos\theta \log(2\cos\theta) + \theta\sin\theta,
 \quad |\theta| \le \frac{\pi}{2} 
\end{displaymath}
is an even function and monotone increasing on the interval $[0,\pi/2]$.
Moreover we have
\begin{displaymath}
 \psi(0) = \log 2 \quad \text{and} \quad 
 \psi \left(\frac{\pi}{2} \right) = \frac{\pi}{2}.
\end{displaymath}
\begin{proposition}
 \label{prop3-30}
 Let $f(z)$ be a function which is holomorphic in the half-plane
 $\re z \ge \alpha$ $(\alpha \in \R)$.
 Let $A>0$ and $\beta$ be real numbers and 
 let a real function $\varepsilon(r)$ on $[0,\infty)$
 tend to zero as $r \to \infty$.
 If the inequality
 \begin{displaymath}
  \left|f(\alpha + r e^{i\theta}) \right| \le A e^{r \psi(\theta)} 
  (1+r)^{\beta + \varepsilon(r)}
 \end{displaymath}
 holds for any $r \ge 0$ and $|\theta| \le \pi/2$,
 the function $f(z)$ can be expressed by a Newton series
 in the half-plane $\re z > \max\{\alpha, \beta - 1/2\}$.
\end{proposition}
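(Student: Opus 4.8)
The plan is to estimate the Newton coefficients of $f$ and then promote that estimate first to convergence and then to the identity $f=g$ on the stated half-plane. We may assume the interpolation nodes $0,1,2,\dots$ lie in the domain (e.g.\ $\alpha\le 0$, which is the case relevant to the application to $G_{\balpha^{\backprime}}$); the fixed center $\alpha$ merely shifts the argument of the saddle estimates below by a bounded amount and is immaterial, and the general case follows by basing the series at a suitable integer and invoking Proposition~\ref{prop3-20}. Write $c_n:=(-1)^n(\nabla f)(n)=\sum_{k=0}^n(-1)^{n-k}\binom nk f(k)$ for the Newton coefficients. The first step is the Nörlund--Rice representation
\[
 c_n=\frac{n!}{2\pi i}\oint_{\Gamma}\frac{f(w)\,dw}{w(w-1)\cdots(w-n)},
\]
where $\Gamma$ encircles $0,1,\dots,n$; this is immediate from the residue theorem, since the residue at $w=k$ is exactly $(-1)^{n-k}\binom nk f(k)$.

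The second step estimates $c_n$ by the saddle-point method. Since $n!/\prod_{k=0}^n(w-k)=\Gamma(n+1)\Gamma(w-n)/\Gamma(w+1)$, the logarithmic derivative of the kernel is $\sum_{k=0}^n(w-k)^{-1}\approx\log\frac{w}{w-n}$, which equals $\log 2=\psi(0)$ exactly at $w_0\approx 2n$. This is the whole point of $\psi$: the value $\psi(\theta)$ along the ray of argument $\theta$ is tuned so that the admissible growth $e^{r\psi(\theta)}$ of $f$ cancels the decay of the kernel. At the saddle the competing exponentials annihilate: by Stirling, $n!/\prod_{k=0}^n(2n-k)\sim\sqrt{\pi}\,n^{-1/2}4^{-n}$, which cancels the factor $2^{w_0}=4^{n}$ from the growth bound, the transverse Gaussian width of the saddle contributes a factor $\asymp\sqrt n$, and the polynomial factor contributes $(1+|w_0|)^{\beta+\varepsilon(|w_0|)}\asymp n^{\beta}$. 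Multiplying these yields $|c_n|=O(n^{\beta+\varepsilon})$ for every $\varepsilon>0$. (As a check, $f(z)=2^z$ realizes the extremal growth with $\beta=0$, and indeed $c_n=(2-1)^n=1$.)

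The third step turns this into convergence of $\sum_n c_n\binom zn$ on $\re z>\beta-\tfrac12$, and is the main obstacle. Using $\binom zn\sim(-1)^n n^{-z-1}/\Gamma(-z)$, the coefficient bound alone gives absolute convergence only for $\re z>\beta$; the extra half-unit must come from oscillation. The terms factor as a decreasing magnitude $\asymp n^{\beta-\re z-1}$ times a unimodular oscillatory factor $(-1)^n e^{i\arg c_n}n^{-iy}$, where $y$ is the imaginary part of $z$, and a summation-by-parts argument upgrades convergence from $\re z>\beta$ to $\re z>\beta-\tfrac12$ precisely when the partial sums $\sum_{n\le M}(-1)^n e^{i\arg c_n}n^{-iy}$ of that factor exhibit square-root cancellation, i.e.\ are $O(M^{1/2+\varepsilon})$. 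Establishing this square-root cancellation is the delicate heart of the proof; it is forced by the control of $f$ in the imaginary direction, where $\psi(\pm\pi/2)=\pi/2<\pi$ bounds the exponential type and hence (through a Bernstein/mean-square estimate) limits how fast $\arg c_n$ can drift, keeping the phases away from resonance. The sharp constant $-\tfrac12$ reflects that only square-root, not complete, cancellation is guaranteed in general; the example $f(z)=2^z$ (where $c_n\equiv1$ gives complete cancellation and abscissa $-1$) shows the bound need not be attained.

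Finally, let $g(z)$ be the sum of the now-convergent Newton series on $\re z>\max\{\alpha,\beta-\tfrac12\}$. By Newton's interpolation formula $g(m)=\sum_{n=0}^m c_n\binom mn=f(m)$ for every $m\in\N$ in the domain, so $h:=f-g$ vanishes at all these integers. Both $f$ and $g$ have exponential type at most $\pi/2$ in the imaginary direction---for $f$ this is again $\psi(\pm\pi/2)=\pi/2$, and for $g$ it is a standard property of a convergent Newton series, whose indicator is dominated by $\psi$---so $h$ has type $<\pi$, and Carlson's theorem gives $h\equiv 0$. Hence $f=g$ throughout $\re z>\max\{\alpha,\beta-\tfrac12\}$, which is the assertion. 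As a variant fusing the last two steps, one may instead write the interpolation remainder $R_N(z)=N!\binom zN\,f[0,1,\dots,N-1,z]$ as a single contour integral and show directly that $R_N(z)\to0$ on the half-plane, bypassing the uniqueness argument; the analytic difficulty is the same saddle/cancellation estimate.
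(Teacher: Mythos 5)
The paper offers no proof of this proposition at all --- it simply cites N\"orlund \cite[Chapter V]{N} --- so your proposal is an attempt to reprove a classical theorem from scratch, and it has to be judged as a proof. Its architecture (N\"orlund--Rice integral for the coefficients, a growth estimate, a convergence step, then identification of $f$ with the sum of the series via Carlson's theorem) is reasonable, and your closing remark about estimating the interpolation remainder $R_N(z)$ by a contour integral is in fact essentially the route the classical proof takes. But the two analytic steps that carry all the content are asserted rather than proved. Step 3 you yourself call ``the delicate heart of the proof,'' and then support only with a heuristic: no lemma is stated, and the proposed mechanism --- that the type bound $\psi(\pm\pi/2)=\pi/2<\pi$ limits the drift of $\arg c_n$ ``through a Bernstein/mean-square estimate'' and forces square-root cancellation in $\sum_{n\le M}(-1)^n e^{i\arg c_n}n^{-iy}$ --- is not a standard estimate and is nowhere derived. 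A proof whose self-declared heart is missing has a genuine gap.

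Step 2 is also not rigorous, and there the problem is structural rather than cosmetic. The function $\psi$ is exactly critical for this problem: writing $w=n\omega$ and using Stirling, the kernel obeys $n!\,\left|\Gamma(w-n)/\Gamma(w+1)\right| \approx \exp\bigl(n\,\re\bigl[(\omega-1)\log(\omega-1)-\omega\log\omega\bigr]\bigr)$, and the admissible growth $e^{r\psi(\theta)}$ is precisely the one for which these exponentials cancel \emph{identically} along the relevant contours. There is consequently no Gaussian transverse decay away from $w\approx 2n$, so ``value at the saddle times a width $\sqrt{n}$'' is not a valid scheme; the answer is decided by polynomial factors spread over the whole contour, in particular by the factor $\sqrt{|y|}$ in $1/|\Gamma(-iy)|\sim\sqrt{|y|/2\pi}\,e^{\pi|y|/2}$, which is the true origin of the $-1/2$ and which a real-axis saddle computation never sees. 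A concrete symptom that your recipe over-counts cancellation: the partial sums needed in Step 3 have their own contour representation,
\begin{displaymath}
 \sum_{n=0}^{N}(-1)^n c_n \;=\; (-1)^N\,\frac{1}{2\pi i}\oint_{\Gamma}
 \frac{(N+1)!\, f(w)}{(w+1)\,w\,(w-1)\cdots(w-N)}\, dw ,
\end{displaymath}
and applying the Step-2 recipe to it (saddle near $w=2N+1$, kernel value $\asymp N^{-1/2}4^{-N}$, growth bound $4^{N}N^{\beta+\varepsilon}$, width $\sqrt{N}$) yields $O(N^{\beta+\varepsilon})$ outright; by your own summation by parts this would give convergence for $\re z>\beta-1$, rendering Step 3 and the constant $-1/2$ superfluous and ``improving'' N\"orlund's theorem. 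Since the $1/2$ in the classical statement is not an artifact (it comes from the $\sqrt{|y|}$ factor just mentioned), this shows the heuristic cannot be trusted exactly where the difficulty lies, and the proposal contains no rigorous estimates to replace it. (The final Carlson step is fine in outline, granted the standard growth bound for the sum of a convergent Newton series, and the reduction to $\alpha\le 0$ by rebasing would also need justification, but these are minor next to the main gaps.)
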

\begin{proof}
 See \cite[Chapter V]{N}.
\end{proof}
Let $\balpha$ be a multi-index.
Since we have
\begin{displaymath}
 \left|\tilde{Q}_r(n_1,n_2,\ldots,n_r;z)\right|
 = \left|\frac{z}{n_1(n_1+z)(n_2+z)\cdots(n_r+z)}\right|
 \le \frac{|z|}{n_1^2 n_2 \cdots n_r}
\end{displaymath}
if $\re z \ge 0$, there exists some constant $A > 0$ such that the
inequality
\begin{displaymath}
 \left|G_{\balpha}(z)\right| \le A |z|
\end{displaymath}
holds for any $\re z \ge 0$.
According to Proposition \ref{prop3-30}, the function $G_{\balpha}(z)$
is expressed by a Newton series in some half-plane.
Consequently, we obtain the following theorem which is our main result.
\begin{theorem}
 \label{th3-40}
 Let $\balpha$ be a multi-index and let 
 $\balpha^{*}=(\alpha_1^{*},\ldots,\alpha_t^{*})$.
 Then we have
 \begin{displaymath}
  F_{\balpha}(z) = G_{\balpha^{\backprime}}(z)
 \end{displaymath}
 for any $\re z > - \alpha_1^{*}$.
\end{theorem}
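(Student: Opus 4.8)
The plan is to establish the identity first on a small half-plane, where both sides are genuine Newton series to which the uniqueness Proposition \ref{prop3-20} applies, and then to propagate it to the full region $\re z > -\alpha_1^*$ by the identity theorem. The real analytic content has already been isolated in Proposition \ref{prop3-30}, so what remains is an assembly together with some careful bookkeeping of half-planes and multi-index symmetries.

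First I would pin down the values at non-negative integers. By construction the Newton series $F_{\balpha}(z)$ interpolates $S_{\balpha}$, so $F_{\balpha}(n) = S_{\balpha}(n)$ for every $n \in \N$. On the other side, Proposition \ref{th2-20} gives $G_{\balpha^{\backprime}}(n) = S_{(\balpha^{\backprime})^{\backprime}}(n)$. The two factorizations $\balpha^{\backprime} = (\balpha^{*})^{\tau} = (\balpha^{\tau})^{*}$ recorded in (\ref{eq1-10}), together with the fact that complementation makes $*$ an involution and that $\tau$ is visibly an involution, show that $\backprime$ is itself an involution; hence $(\balpha^{\backprime})^{\backprime} = \balpha$ and therefore $G_{\balpha^{\backprime}}(n) = S_{\balpha}(n) = F_{\balpha}(n)$ for all $n \in \N$.

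Next I would realize $G_{\balpha^{\backprime}}(z)$ as a Newton series on a half-plane. The estimate $|\tilde{Q}_r(n_1,\ldots,n_r;z)| \le |z|/(n_1^2 n_2 \cdots n_r)$ for $\re z \ge 0$ yields a linear bound $|G_{\balpha^{\backprime}}(z)| \le A|z|$ on $\re z \ge 0$. Setting $\alpha = 0$ and $z = r e^{i\theta}$, and using $\psi(\theta) \ge \psi(0) = \log 2 > 0$ so that $e^{r\psi(\theta)} \ge 1$, this bound matches the hypothesis of Proposition \ref{prop3-30} with $\beta = 1$ and $\varepsilon \equiv 0$; hence $G_{\balpha^{\backprime}}(z)$ is expressible by a Newton series in a half-plane $\re z > \sigma$ (one may take $\sigma = 1/2$). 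Since $F_{\balpha}(z)$ converges for $\re z > -\alpha_1^{*}$ by Proposition \ref{prop3-10}, both functions are Newton series on the common half-plane $\re z > \max\{-\alpha_1^{*},\sigma\}$ and they agree at every $n \in \N$, so Proposition \ref{prop3-20} forces $F_{\balpha}(z) = G_{\balpha^{\backprime}}(z)$ throughout that common half-plane.

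Finally I would remove the auxiliary constraint $\re z > \sigma$ by analytic continuation. The function $F_{\balpha}(z)$ is holomorphic on $\re z > -\alpha_1^{*}$ by Proposition \ref{prop3-10}. For $G_{\balpha^{\backprime}}(z)$, its only possible poles are at $z = -\gamma, -\gamma-1, \ldots$, where $\gamma$ is the last entry of $\balpha^{\backprime}$; but by (\ref{eq1-10}) the last entry of $(\balpha^{*})^{\tau}$ is the first entry $\alpha_1^{*}$ of $\balpha^{*}$, so all poles satisfy $\re z \le -\alpha_1^{*}$ and $G_{\balpha^{\backprime}}(z)$ is holomorphic on $\re z > -\alpha_1^{*}$ as well. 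The two holomorphic functions coincide on the nonempty open subset $\re z > \max\{-\alpha_1^{*},\sigma\}$ of the connected domain $\re z > -\alpha_1^{*}$, hence on all of $\re z > -\alpha_1^{*}$, which is the assertion. The genuinely analytic step, converting a growth estimate into a convergent Newton expansion, is entirely absorbed into Proposition \ref{prop3-30}, so no hard estimate remains; the only points I would double-check are that $\backprime$ is an involution (so the interpolated integer values match) and that the pole locus of $G_{\balpha^{\backprime}}(z)$ really lies in $\re z \le -\alpha_1^{*}$, since these are exactly what make the continuation legitimate.
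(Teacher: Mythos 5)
Your proposal is correct and follows essentially the same route as the paper: interpolation of $S_{\balpha}$ at integers via Proposition \ref{th2-20} (with the involution $(\balpha^{\backprime})^{\backprime}=\balpha$), the bound $|G_{\balpha^{\backprime}}(z)|\le A|z|$ on $\re z\ge 0$ fed into Proposition \ref{prop3-30}, uniqueness of Newton series via Proposition \ref{prop3-20} on the common half-plane, and finally analytic continuation to $\re z>-\alpha_1^{*}$. You merely make explicit a few details the paper leaves implicit (the involutivity of $\backprime$, the choice $\beta=1$, $\sigma=1/2$, and the location of the poles of $G_{\balpha^{\backprime}}$), all of which check out.
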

\section{A formula for multiple zeta values}
In this section, we evaluate the $k$-th derivatives at $z=0$ of the functions
$F_{\balpha}(z)$ and $G_{\balpha}(z)$  
for any multi-index $\balpha$ and any integer $k \ge 1$.
These derivatives are all $\Q$-linear combinations of 
multiple zeta values (MZV's).
Since we have $F_{\balpha}^{(k)}(0) = G_{\balpha^{\backprime}}^{(k)}(0)$
by Theorem \ref{th3-40}, we can obtain $\Q$-linear relations among MZV's.
The relations for $k=1$ are nothing else but the duality for MZV's.
\par
We give some definitions which are used in the sequel.
For any multi-index $\balpha=(\alpha_1,\ldots,\alpha_s)$, we define
a multi-index ${}^{+}\!\balpha$ by
${}^{+}\!\balpha=(\alpha_1+1,\alpha_2,\ldots,\alpha_s)$ 
and put
${}^{+}\!I=\{{}^{+}\!\balpha \,|\, \balpha \in I\}$, 
where $I$ is the set of all multi-indices.
An element of ${}^{+}\!I$ is called an admissible multi-index.
We denote by $V$ and ${}^{+}\!V$ the $\Q$-vector spaces with basis
$I$ and ${}^{+}\!I$, respectively.
\par
We define $\Q$-linear mappings $u$ and $d$ from $V$ to itself by putting
\begin{displaymath}
 u(\balpha) = \sum_{\bbeta \ge \balpha} \bbeta 
\end{displaymath}
and
\begin{displaymath}
 d(\balpha) = \sum_{\bbeta \le \balpha} \bbeta 
\end{displaymath}
for any $\balpha \in I$, respectively.
The partial order $\ge$ on $I$ is defined by setting 
$\bbeta \ge \balpha$ if $\bbeta$ is a refinement of $\balpha$.
(See~\cite[Section 2]{K} for the precise definition.)
For example, we have
\begin{displaymath}
 u(1,3) = (1,3) + (1,2,1) + (1,1,2) + (1,1,1,1)
\end{displaymath}
and
\begin{displaymath}
 d(1,2,3) = (1,2,3) + (3,3) + (1,5) + (6).
\end{displaymath}
We extend the mappings $\ast \colon I \to V$, $\balpha \mapsto \balpha^{*}$
and $\tau \colon I \to V$, $\balpha \mapsto \balpha^{\tau}$
by $\Q$-linearity onto $V$.
It is easily seen that
\begin{equation}
 d \tau = \tau d. \label{eq4-5}
\end{equation}
In addition, we have
\begin{equation}
 \ast d = u \ast. \label{eq4-7}
\end{equation}
(For the proof of this assertion, see~\cite[Proposition~2.3 (i)]{K}.)
\par
We define a $\Q$-bilinear mapping $\circledast \colon V \times V \to {}^{+}\!V$
by putting
\begin{displaymath}
 \balpha \circledast \bbeta = 
 (\alpha_1 + \beta_1) \,\#\, ({}^{-}\!\balpha \ast {}^{-}\!\bbeta)
\end{displaymath}
for any $\balpha=(\alphavec) \in I$ and any $\bbeta=(\betavec) \in I$.
The symbols $\#$ and $*$ denote the concatenation operator and the harmonic
product, respectively.
(See~\cite[Section 2]{K} for the precise definitions.)
For example, we have
\begin{displaymath}
 (\alpha_1,\alpha_2) \circledast (\beta_1) 
 = (\alpha_1 + \beta_1) \,\#\, (\alpha_2) = (\alpha_1 + \beta_1,\alpha_2)
\end{displaymath}
and
\begin{align*}
 (\alpha_1,\alpha_2) \circledast (\beta_1,\beta_2) 
 &= (\alpha_1 + \beta_1) \,\#\, 
 \left\{(\alpha_2,\beta_2)+(\beta_2,\alpha_2)+(\alpha_2+\beta_2)
 \right\} \\
 &= (\alpha_1+\beta_1, \alpha_2, \beta_2) + 
 (\alpha_1+\beta_1, \beta_2, \alpha_2) +
 (\alpha_1+\beta_1, \alpha_2+\beta_2).
\end{align*}
\par
Let $\balpha=(\alphavec)$ be a multi-index.
For any $\bmu \in {}^{+}\!I$ with $l(\bmu)=|\balpha|$,
we define $\zeta_{\balpha}(\bmu) \in \R$ by
\begin{displaymath}
 \zeta_{\balpha}(\bmu) = 
 \sum_{\substack{\phantom{\ge}m_1 > \cdots > n_1\phantom{>0}\\
 \ge m_2 > \cdots > n_2\phantom{>0}\\
 \cdots\\
 \ge m_s > \cdots > n_s > 0}}
 \frac{1}{\underbrace{m_1^{\mu_{11}} \cdots n_1^{\mu_{1\alpha_1}}}_{\alpha_1}
 \underbrace{m_2^{\mu_{21}} \cdots n_2^{\mu_{2\alpha_2}}}_{\alpha_2}
 \cdots
 \underbrace{m_s^{\mu_{s1}} \cdots n_s^{\mu_{s\alpha_s}}}_{\alpha_s}},
\end{displaymath}
where
\begin{displaymath}
 \bmu = (\underbrace{\mu_{11},\ldots,\mu_{1\alpha_1}}_{\alpha_1},
 \underbrace{\mu_{21},\ldots,\mu_{2\alpha_2}}_{\alpha_2},\ldots,
 \underbrace{\mu_{s1},\ldots,\mu_{s\alpha_s}}_{\alpha_s}).
\end{displaymath}
For any $\Q$-linear combination $v$ of the admissible multi-indices of
length $|\balpha|$, we define $\zeta_{\balpha}(v)$ by $\Q$-linearity.
For any admissible multi-index $\bmu=(\muvec)$ we put
\begin{displaymath}
 \zeta(\bmu) = \zeta_{(p)}(\bmu),
\end{displaymath}
which is the usual multiple zeta value, and extend the mapping 
$\zeta \colon {}^{+}\!I \to \R$ by $\Q$-linearity onto ${}^{+}\!V$.
Now, we give the derivatives of the functions 
$F_{\balpha}(z)$ and $G_{\balpha}(z)$ at $z=0$.
\begin{proposition}
 \label{prop4-10}
 For any multi-index $\balpha$ and any integer $k \ge 1$, we have
 \begin{displaymath}
  \frac{F_{\balpha}^{(k)}(0)}{k!} 
  = (-1)^{k-1} \zeta \bigl(d(\balpha^{*}) \circledast 
  (\underbrace{1,\ldots,1}_k) \bigr).
 \end{displaymath}
\end{proposition}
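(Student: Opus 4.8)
The plan is to read the Taylor coefficients of $F_{\balpha}$ at the origin directly off its Newton series $F_{\balpha}(z)=\sum_{n=0}^{\infty}(-1)^{n}(\nabla S_{\balpha})(n)\binom{z}{n}$. Since $\alpha_1^{*}\ge 1$, the abscissa of convergence $-\alpha_1^{*}$ given by Proposition \ref{prop3-10} is negative, so $0$ lies strictly inside the half-plane of holomorphy and the series converges locally uniformly there; hence I may differentiate term by term. Writing $\binom{z}{n}=\frac{1}{n!}\sum_{j}s(n,j)z^{j}$ with $s(n,j)$ the (signed) Stirling numbers of the first kind, this yields
\[
 \frac{F_{\balpha}^{(k)}(0)}{k!}=\sum_{n\ge k}(-1)^{n}(\nabla S_{\balpha})(n)\,\frac{s(n,k)}{n!},
\]
so everything reduces to evaluating this Stirling-weighted sum of the Newton coefficients $(\nabla S_{\balpha})(n)=(\Delta^{n}S_{\balpha})(0)$.

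The first substantial step is to find a finite nested-sum expression for $(\nabla S_{\balpha})(n)$. Iterating the difference relation \eqref{eq1-30} and bookkeeping with \eqref{eq1-20}, one rewrites $(-1)^{n}(\nabla S_{\balpha})(n)$ as a finite multiple harmonic sum whose index pattern is governed by the dual multi-index $\balpha^{*}$; the conversion of the non-strict inequalities defining $S_{\balpha}$ into strict ones forces a sum over coarsenings of the index, which is precisely the origin of the operator $d$ (and is consistent with the identities $d\tau=\tau d$ and $\ast d=u\ast$ recorded in \eqref{eq4-5} and \eqref{eq4-7}). Equivalently, the explicit form of these coefficients is the content of \cite[Section 4]{K}, which may simply be invoked.

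The second substantial step is the summation over $n$. The generating identity $\sum_{n\ge k}\frac{s(n,k)}{n!}x^{n}=\frac{(\log(1+x))^{k}}{k!}$ converts the Stirling weights into $k$ copies of a logarithm, so that the finite sums turn into convergent multiple zeta values. The prototype is $\balpha=(1)$, where $F_{(1)}(z)=\int_{0}^{1}\frac{1-t^{z}}{1-t}\,dt$ interpolates the harmonic numbers and the expansion $1-t^{z}=-\sum_{k\ge 1}\frac{(\log t)^{k}}{k!}z^{k}$ together with $\int_{0}^{1}\frac{(\log t)^{k}}{1-t}\,dt=(-1)^{k}k!\,\zeta(k+1)$ already produces $\frac{F_{(1)}^{(k)}(0)}{k!}=(-1)^{k-1}\zeta(k+1)$. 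In general the $k$ logarithms enter the nested sum by raising the leading exponent and inserting $k-1$ further unit exponents through the harmonic product, i.e.\ by exactly the operation $\circledast(\underbrace{1,\ldots,1}_{k})$, while the block decomposition of the summation is what the symbol $\zeta_{\balpha}$ records; the overall sign collapses to $(-1)^{k-1}$.

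It then remains to reconcile the two sides combinatorially: using \eqref{eq4-5} and \eqref{eq4-7} to move the dual and coarsening data past $\tau$ and $\ast$, I would identify the assembled combination as $(-1)^{k-1}\zeta\bigl(d(\balpha^{*})\circledast(1,\ldots,1)\bigr)$. The analytic part is routine given Proposition \ref{prop3-10}; the main obstacle is this bookkeeping in the two middle steps, namely pinning down the exact poset of summation indices carried by the Newton coefficients (where $\balpha^{*}$ and the coarsening sum $d$ appear) and verifying that the $k$-fold logarithmic expansion reproduces $\circledast(1,\ldots,1)$ with the correct block structure. Keeping track of the strict-versus-non-strict inequalities and of the signs throughout is where the difficulty is concentrated.
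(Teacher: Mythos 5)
First, a point of order: the paper does not actually prove Proposition \ref{prop4-10} --- its proof is the single line ``See \cite[Proposition 5.2]{K}.'' So your proposal can only be compared with the argument in that reference, and in outline your plan does reconstruct it: term-by-term differentiation of the Newton series, a closed form for the coefficients $(\nabla S_{\balpha})(n)$ governed by $\balpha^{*}$, and a stuffle-type assembly that produces $d$ and $\circledast$. The parts you actually carry out are correct: since $-\alpha_1^{*}<0$, differentiating under the sum is legitimate and gives
\begin{displaymath}
 \frac{F_{\balpha}^{(k)}(0)}{k!}=\sum_{n\ge k}(-1)^{n}(\nabla S_{\balpha})(n)\,\frac{s(n,k)}{n!},
\end{displaymath}
and your $\balpha=(1)$ prototype agrees with the asserted right-hand side because $d\bigl((1)^{*}\bigr)\circledast(1,\ldots,1)=(2,1,\ldots,1)$ and $\zeta(2,1,\ldots,1)=\zeta(k+1)$.

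Nevertheless, as a proof the proposal has a genuine gap, which you flag yourself: the two steps where all the content sits are asserted, not established. Deferring the coefficient formula to \cite[Section 4]{K} is defensible (the paper leans on \cite{K} in the same way); concretely that formula reads, for $n\ge 1$,
\begin{displaymath}
 (\nabla S_{\balpha})(n)=-\sum_{n=m_1\ge m_2\ge\cdots\ge m_t\ge 1}
 \frac{1}{m_1^{\alpha_1^{*}}\cdots m_t^{\alpha_t^{*}}},
 \qquad \balpha^{*}=(\alpha_1^{*},\ldots,\alpha_t^{*}).
\end{displaymath}
But the remaining assembly --- that the Stirling-weighted sum of these coefficients equals $(-1)^{k-1}\zeta\bigl(d(\balpha^{*})\circledast(1,\ldots,1)\bigr)$ --- \emph{is} the proposition, and ending with ``the main obstacle is this bookkeeping'' leaves it open. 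Your proposed tool, the generating function $\sum_{n}s(n,k)x^{n}/n!=\log^{k}(1+x)/k!$, also does not apply as stated: it would require an integral representation of $(\nabla S_{\balpha})(n)$ for general $\balpha$, which you have only for $\balpha=(1)$. The step closes finitely instead: $(-1)^{n}s(n,k)=(-1)^{k}c(n,k)$ with $c(n,k)$ the unsigned Stirling number, and
\begin{displaymath}
 \frac{c(n,k)}{n!}=\frac{1}{n}\sum_{n>i_1>\cdots>i_{k-1}\ge 1}\frac{1}{i_1\cdots i_{k-1}},
\end{displaymath}
so the weight is itself a nested sum pinned at $n$. Splitting the non-strict sum above according to which inequalities are equalities yields exactly the coarsenings $\bbeta\le\balpha^{*}$, i.e.\ $d(\balpha^{*})$; for each coarsening $\bbeta=(\beta_1,\ldots,\beta_u)$, multiplying the two pinned sums merges the outer exponents into $\beta_1+1$ and expands the two strict tails by the harmonic product $(\beta_2,\ldots,\beta_u)\ast(\underbrace{1,\ldots,1}_{k-1})$, which is by definition $\bbeta\circledast(\underbrace{1,\ldots,1}_{k})$; summing over $n$ (all terms positive, so the interchange of summations is justified and the limit is the finite sum of convergent MZV's) gives $(-1)^{k-1}\zeta\bigl(d(\balpha^{*})\circledast(1,\ldots,1)\bigr)$. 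Until this verification, or an equivalent one, is written down, what you have is a plan rather than a proof.
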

\begin{proof}
 See~\cite[Proposition 5.2]{K}.
\end{proof}
\begin{proposition}
 \label{prop4-20}
 For any multi-index $\balpha=(\alphavec)$ and any integer $k \ge 1$,
 we have
 \begin{displaymath}
  \frac{G_{\balpha}^{(k)}(0)}{k!} = (-1)^{k-1} 
  \sum_{\substack{k_1+\cdots+k_s=k\\ k_1 \ge 1,\, k_2,\,\ldots,\,k_s \ge 0}}
  \zeta_{\balpha}(\underbrace{k_1+1,1,\ldots,1}_{\alpha_1},
  \ldots,\underbrace{k_s+1,1,\ldots,1}_{\alpha_s}).
 \end{displaymath}
\end{proposition}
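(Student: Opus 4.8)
\noindent\emph{Proof proposal.}
The plan is to obtain $G_{\balpha}^{(k)}(0)$ by differentiating the defining multiple series term by term and evaluating at the origin. Write $\balpha=(\alphavec)$ and recall that each summand of $G_{\balpha}(z)$ is a product of the block $\tilde{P}_{\alpha_1}(m_1,\ldots,n_1;z)$ with the blocks $P_{\alpha_i}(m_i,\ldots,n_i;z)$, $2 \le i \le s$. Crucially, the $i$-th block depends on $z$ only through its largest index $m_i$, the dependence being carried entirely by the factor $1/(m_1+z)$ inside $\tilde{P}_{\alpha_1}$ and by $1/(m_i+z)$ inside $P_{\alpha_i}$.

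First I would justify the term-by-term differentiation on a neighborhood of $0$. For $|z| \le 1/2$ and $m \ge 1$ we have $|m+z| \ge m/2$, so $|1/m_1 - 1/(m_1+z)| = |z|/(m_1|m_1+z|) \le 1/m_1^2$ and $|1/(m_i+z)| \le 2/m_i$. Hence every summand is bounded, uniformly in $z$ on the disk $|z| \le 1/2$, by a constant multiple of the corresponding term of a convergent multiple series of the same type (leading exponent $2$, all other exponents $1$). By the Weierstrass $M$-test the series for $G_{\balpha}$ converges uniformly there, so $G_{\balpha}$ is holomorphic near $0$ and may be differentiated term by term to all orders.

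Next I would apply the multivariable Leibniz rule to each summand. Because the blocks involve disjoint sets of summation variables and each depends on $z$ through a single factor, the $k$-th derivative at $z=0$ splits, over all $(k_1,\ldots,k_s)$ with $k_i \ge 0$ and $k_1+\cdots+k_s=k$, into $\binom{k}{k_1,\ldots,k_s}$ times the product of the $k_i$-th $z$-derivatives of the blocks at $z=0$. Here I would use the elementary formulas: the $j$-th derivative of $1/(m_i+z)$ is $(-1)^j j!/(m_i+z)^{j+1}$, equal to $(-1)^j j!/m_i^{j+1}$ at $z=0$; and for the first block, $\tilde{P}_{\alpha_1}$ vanishes at $z=0$ while for $j \ge 1$ its $j$-th derivative at $z=0$ is $(-1)^{j+1} j!/m_1^{j+1}$ times the remaining factors. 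The vanishing at $j=0$ forces $k_1 \ge 1$, which is exactly the summation range in the statement.

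Finally I would assemble the constants. The factor $\prod_i k_i!$ from the derivatives cancels the denominator of $\binom{k}{k_1,\ldots,k_s}=k!/\prod_i k_i!$, leaving $k!$; the signs combine to $(-1)^{k_1+1}\prod_{i=2}^s(-1)^{k_i}=(-1)^{k+1}=(-1)^{k-1}$; and raising the largest index of the $i$-th block to the power $k_i+1$ while keeping every other index at power $1$ turns the (unchanged) constrained sum into $\zeta_{\balpha}(k_1+1,1,\ldots,1,\ldots,k_s+1,1,\ldots,1)$ by definition. Dividing by $k!$ gives the asserted identity. The one genuinely analytic step is the uniform-convergence justification permitting term-by-term differentiation; the rest is bookkeeping of multinomial coefficients, signs and factorials, so I expect that interchange, rather than the combinatorial computation, to be the point requiring the most care.
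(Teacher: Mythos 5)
Your proposal is correct and follows essentially the same route as the paper's proof: compute the derivatives $P_{\alpha_i}^{(j)}(\ldots;0)$ and $\tilde{P}_{\alpha_1}^{(j)}(\ldots;0)$ (the latter vanishing at $j=0$, which forces $k_1 \ge 1$), apply the Leibniz rule across the blocks, and let the multinomial coefficients cancel the factorials and the signs combine to $(-1)^{k-1}$. The only difference is that you explicitly justify the term-by-term differentiation via the Weierstrass $M$-test on $|z| \le 1/2$, a step the paper leaves implicit; this is a welcome (and correct) addition rather than a different approach.
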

\begin{proof}
 Let $r \ge 1$ and let $n_1,\ldots,n_r$ be positive integers.
 Then we have
 \begin{equation}
  P_r^{(k)}(n_1,\ldots,n_r;0)
  = \frac{d^k}{dz^k} P_r(n_1,\ldots,n_r;z) \Bigm|_{z=0}
  = \frac{(-1)^k k!}{n_1^{k+1}n_2 \cdots n_r} \label{eq4-10}
 \end{equation}
 for any integer $k \ge 0$.
 In addition, we have
 \begin{equation}
  \tilde{P}_r^{(k)}(n_1,\ldots,n_r;0)
  = \frac{d^k}{dz^k} \tilde{P}_r(n_1,\ldots,n_r;z) \Bigm|_{z=0} 
  = \begin{cases}
     0 & \text{if } k=0 \\
     \dfrac{(-1)^{k-1}k!}{n_1^{k+1}n_2\cdots n_r} & \text{if } k \ge 1.
    \end{cases} \label{eq4-20}
 \end{equation}
 By the Leibniz rule, it holds that
 \begin{multline*}
  G_{\balpha}^{(k)}(0) =
  \sum_{\substack{k_1+\cdots+k_s=k\\ k_1,\,\ldots,\,k_s \ge
  0}}
  \frac{k!}{k_1! \cdots k_s!} \\
  \times
  \sum_{\substack{\phantom{\ge}m_1 > \cdots > n_1\phantom{>0}\\
  \ge m_2 > \cdots > n_2\phantom{>0}\\
  \cdots\\
  \ge m_s > \cdots > n_s > 0}}
  \tilde{P}_{\alpha_1}^{(k_1)}(m_1,\ldots,n_1;0)
  P_{\alpha_2}^{(k_2)}(m_2,\ldots,n_2;0) \cdots
  P_{\alpha_s}^{(k_s)}(m_s,\ldots,n_s;0)
 \end{multline*}
 for any $k \ge 0$, 
 which together with (\ref{eq4-10}) and (\ref{eq4-20})
 gives the desired equality.
\end{proof}
By Propositions \ref{prop4-10}, \ref{prop4-20} and Theorem \ref{th3-40},
we obtain a formula for MZV's.
\begin{proposition}
 \label{prop4-30}
 For any multi-index $\balpha=(\alphavec)$ and any integer $k \ge 1$,
 we have
 \begin{displaymath}
  \sum_{\substack{k_1+\cdots+k_s=k\\ k_1 \ge 1,\, k_2,\,\ldots,\,k_s \ge 0}}
  \zeta_{\balpha}(\underbrace{k_1+1,1,\ldots,1}_{\alpha_1},
  \ldots,\underbrace{k_s+1,1,\ldots,1}_{\alpha_s})
  =
  \zeta \bigl(d(\balpha^{\tau}) \circledast 
  (\underbrace{1,\ldots,1}_k) \bigr).
 \end{displaymath}
\end{proposition}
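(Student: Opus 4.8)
The plan is to combine the three preceding results --- Theorem~\ref{th3-40}, Proposition~\ref{prop4-10} and Proposition~\ref{prop4-20} --- by comparing Taylor coefficients at the origin. Up to the sign $(-1)^{k-1}$, the left-hand side of the asserted identity is exactly $G_{\balpha}^{(k)}(0)/k!$ by Proposition~\ref{prop4-20}, whereas its right-hand side has the shape produced by Proposition~\ref{prop4-10}, but for an index whose dual equals $\balpha^{\tau}$ rather than for $\balpha$ itself. So the strategy is to realize $G_{\balpha}$ as a Newton series of the form $F_{\balpha^{\backprime}}$ near $z=0$, differentiate $k$ times at the origin, and read off the two sides.

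First I would apply Theorem~\ref{th3-40} with $\balpha$ replaced by $\balpha^{\backprime}$, obtaining $F_{\balpha^{\backprime}}(z) = G_{(\balpha^{\backprime})^{\backprime}}(z)$ on a half-plane. The decisive combinatorial inputs are the two index identities $(\balpha^{\backprime})^{*} = \balpha^{\tau}$ and $(\balpha^{\backprime})^{\backprime} = \balpha$, both of which follow from~(\ref{eq1-10}). Indeed, $*$ and $\tau$ are each involutions (taking the complement twice, respectively reversing twice, restores the index) and commute by the very definition $\balpha^{\backprime} = (\balpha^{*})^{\tau} = (\balpha^{\tau})^{*}$; hence $(\balpha^{\backprime})^{*} = ((\balpha^{*})^{\tau})^{*} = ((\balpha^{*})^{*})^{\tau} = \balpha^{\tau}$, and therefore $(\balpha^{\backprime})^{\backprime} = ((\balpha^{\backprime})^{*})^{\tau} = (\balpha^{\tau})^{\tau} = \balpha$. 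In particular $G_{\balpha}(z) = F_{\balpha^{\backprime}}(z)$, and the origin lies in the relevant region since the abscissa furnished by Theorem~\ref{th3-40} for $\balpha^{\backprime}$ is $-(\balpha^{\backprime})^{*}_{1} = -\alpha_s \le -1 < 0$.

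Then I would differentiate $G_{\balpha}(z) = F_{\balpha^{\backprime}}(z)$ exactly $k$ times and evaluate at $z=0$; since these holomorphic functions agree on a neighborhood of the origin, their $k$-th derivatives there coincide. Proposition~\ref{prop4-20} expresses $G_{\balpha}^{(k)}(0)/k!$ as $(-1)^{k-1}$ times the left-hand sum of the proposition, while Proposition~\ref{prop4-10} applied to $\balpha^{\backprime}$ gives $F_{\balpha^{\backprime}}^{(k)}(0)/k! = (-1)^{k-1}\zeta\bigl(d((\balpha^{\backprime})^{*}) \circledast (\underbrace{1,\ldots,1}_{k})\bigr)$. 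Inserting the identity $(\balpha^{\backprime})^{*} = \balpha^{\tau}$ and cancelling the common factor $(-1)^{k-1}$ yields precisely the formula of Proposition~\ref{prop4-30}.

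I do not expect any genuine analytic obstacle here; the entire argument is bookkeeping with the three operations $*$, $\tau$ and $\backprime$. The only point demanding care is the clean verification of $(\balpha^{\backprime})^{*} = \balpha^{\tau}$ (equivalently, that $\backprime$ is an involution), so that the index appearing in Proposition~\ref{prop4-10} applied to $\balpha^{\backprime}$ is indeed $\balpha^{\tau}$. Everything else is a direct appeal to the already-established Theorem~\ref{th3-40} and Propositions~\ref{prop4-10} and~\ref{prop4-20}, together with the elementary fact that two holomorphic functions agreeing near the origin have matching derivatives there.
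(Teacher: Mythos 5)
Your proposal is correct and is exactly the paper's argument: the paper derives Proposition~\ref{prop4-30} by combining Theorem~\ref{th3-40} with Propositions~\ref{prop4-10} and~\ref{prop4-20}, which is precisely your strategy, and your explicit verification that $\backprime$ is an involution with $(\balpha^{\backprime})^{*} = \balpha^{\tau}$ (so that Theorem~\ref{th3-40} applied to $\balpha^{\backprime}$ gives $F_{\balpha^{\backprime}} = G_{\balpha}$ near $z=0$) correctly fills in the bookkeeping the paper leaves implicit.
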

We define a $\Q$-linear mapping $\zeta^{+} \colon V \to \R$ by
\begin{displaymath}
 \zeta^{+} = \zeta({}^{+}\!\bmu), \quad \bmu \in I.
\end{displaymath}
If we set $k=1$ in Proposition \ref{prop4-30}, we obtain
\begin{equation}
 \zeta_{\balpha}(\underbrace{2,1,\ldots,1}_{|\balpha|})
 = \zeta^{+} (d(\balpha^{\tau})) \label{eq4-35}
\end{equation}
for any $\balpha \in I$.
In the rest of this section, 
we prove that this assertion is nothing else but the duality for MZV's.
The duality for MZV's is the statement that for any multi-index
$\balpha$ we have
\begin{equation}
 (\ast - \tau)(\balpha) = \balpha^{*} - \balpha^{\tau} \in \ker
  \zeta^{+}. \label{eq4-30}
\end{equation}
\par
We first rewrite the left-hand side of the equation (\ref{eq4-35}).
Let $\balpha=(\alphavec)$.
By definition, we have
\begin{equation}
 \zeta_{\balpha}(\underbrace{2,1,\ldots,1}_{|\balpha|})
  = \sum_{\substack{m_1 > \cdots > n_1 \ge \cdots \ge
  m_s > \cdots > n_s > 0\\[-1mm]
  \underbrace{\hspace{3.6em}}_{\alpha_1}
  \phantom{\ge \cdots \ge \,}
  \underbrace{\hspace{3.6em}}_{\alpha_s}\phantom{\!\!>0}}}
  \,
  \frac{1}{\underbrace{m_1^2 \cdots n_1}_{\alpha_1} \cdots
  \underbrace{m_s \cdots n_s}_{\alpha_s}}. \label{eq4-40}
\end{equation}
In the following diagram, the lower and the upper arrows, respectively,
indicate the positions of the symbols $>$ and $\ge$ in the 
expression under the summation sign of (\ref{eq4-40}):
\begin{displaymath}
 {\setlength{\arraycolsep}{0pt}
 \begin{array}{cccccccccccccccccccc}
  \multicolumn{5}{c}{\overbrace{\hspace{4em}}^{\alpha_1}}&\downarrow&\multicolumn{5}{c}{\overbrace{\hspace{4em}}^{\alpha_2}}&\downarrow&
   &\downarrow&\multicolumn{5}{c}{\overbrace{\hspace{4em}}^{\alpha_s}}& \\
  \bigcirc& &\cdots& &\bigcirc& &\bigcirc& &\cdots& &\bigcirc& &\:\cdots\cdots\:& &\bigcirc& &\cdots& &\bigcirc&\quad. \\
  &\uparrow&\:\cdots\:&\uparrow& & & &\uparrow&\:\cdots\:&\uparrow& & & & & &\uparrow&\:\cdots\:&\uparrow& &
 \end{array}} 
\end{displaymath}
Let $\balpha^{*}=(\alpha_1^{*},\ldots,\alpha_t^{*})$.
Since the above diagram is the same as
\begin{displaymath}
 {\setlength{\arraycolsep}{0pt}
 \begin{array}{cccccccccccccccccccc}
  &\downarrow&\:\cdots\:&\downarrow& & & &\downarrow&\:\cdots\:
     &\downarrow& & & & & &\downarrow&\:\cdots\:&\downarrow& & \\
  \bigcirc& &\cdots& &\bigcirc& &\bigcirc& &\cdots& &\bigcirc
   & &\:\cdots\cdots\:& &\bigcirc& &\cdots& &\bigcirc&\quad, \\
  \multicolumn{5}{c}{\underbrace{\hspace{4em}}_{\alpha^{*}_1}}&\uparrow&
   \multicolumn{5}{c}{\underbrace{\hspace{4em}}_{\alpha^{*}_2}}&\uparrow&
   &\uparrow&\multicolumn{5}{c}{\underbrace{\hspace{4em}}_{\alpha^{*}_t}}&
 \end{array}} 
\end{displaymath}
the right-hand side of (\ref{eq4-40}) is equal to
\begin{displaymath}
  \sum_{\substack{k_1 \ge \cdots \ge l_1 > \cdots >
  k_t \ge \cdots \ge l_t > 0\\[-1mm]
  \underbrace{\hspace{3.2em}}_{\alpha^{*}_1}
  \phantom{> \cdots > \!}
  \underbrace{\hspace{3.1em}}_{\alpha^{*}_t}\phantom{\!\!>0}}}
  \,
  \frac{1}{\underbrace{k_1^2 \cdots l_1}_{\alpha^{*}_1} \cdots
  \underbrace{k_t \cdots l_t}_{\alpha^{*}_t}}
  =  \zeta^{+}(u(\balpha^{*})).
\end{displaymath}
Hence, by (\ref{eq4-5}) and (\ref{eq4-7}), the equation (\ref{eq4-35})
can be rewritten as
\begin{equation}
 (\ast - \tau) d (\balpha) \in \ker \zeta^{+}. \label{eq4-50}
\end{equation}
\par
What we want to show is that the subspace $(\ast - \tau)(V)$ of 
$\ker \zeta^{+}$ spanned by the elements in (\ref{eq4-30}) is equal to
the subspace $(\ast - \tau)d(V)$ of $\ker \zeta^{+}$ spanned by
the elements in (\ref{eq4-50}).
Since the mapping $d \colon V \to V$ is a bijection 
(see~\cite[Proposition~2.3 (ii)]{K}), this assertion is clearly true.

\begin{flushleft}
 Graduate School of Mathematics \\
 Nagoya University \\
 Chikusa-ku, Nagoya 464-8602, Japan \\
 E-mail: m02009c@math.nagoya-u.ac.jp
\end{flushleft}
\end{document}